\DeclareSymbolFont{rsfs}{U}{rsfs}{m}{n}
\DeclareSymbolFontAlphabet{\mathrsfs}{rsfs}
\DeclareTextFontCommand{\textcyr}{\fontencoding{OT2}
    \fontfamily{wncyr}\fontseries{m}\fontshape{n}\selectfont}
\newcommand{\Ch}{\textcyr{Ch}}
\theoremstyle{plain}
\newtheorem{theorem}{Theorem}[section]
\newtheorem{proposition}[theorem]{Proposition}
\newtheorem{lemma}[theorem]{Lemma}
\newtheorem{corollary}[theorem]{Corollary}
\newtheorem{maintheorem}[theorem]{Main Theorem}
\theoremstyle{definition}
\newtheorem{remark}[theorem]{Remark}
\newtheorem{subsecc}[theorem]{}
\numberwithin{equation}{theorem}
\def\C{{\mathds C}}
\newcommand{\R}{{\mathds R}}
\newcommand{\Q}{{\mathds Q}}
\newcommand{\Z}{{\mathds Z}}
\newcommand{\G}{{\mathbb G}}
\renewcommand{\H}{{\mathds H}}
\newcommand{\ov}{\overline}
\newcommand{\wh}{\widehat}
\newcommand{\lra}{\longrightarrow}
\DeclareMathOperator{\Aut}{Aut}
\DeclareMathOperator{\coker}{coker}
\DeclareMathOperator{\Gal}{Gal}
\DeclareMathOperator{\im}{im}
\newcommand{\ad}{{\rm ad}}
\newcommand{\ab}{{\rm ab}}
\newcommand{\ssc}{{\rm sc}}
\newcommand{\into}{\hookrightarrow}
\newcommand{\onto}{\twoheadrightarrow}
\newcommand{\labelt}[1]{\xrightarrow{\makebox[1.em]{\scriptsize ${#1}$}}}
\newcommand{\labelto}[1]{\xrightarrow{\makebox[1.5em]{\scriptsize ${#1}$}}}
\newcommand{\labeltoo}[1]{\xrightarrow{\makebox[2em]{\scriptsize ${#1}$}}}
\newcommand{\labeltooo}[1]{\xrightarrow{\makebox[2.7em]{\scriptsize ${#1}$}}}
\newcommand{\longisoto}{{\ \labelt{\raisebox{-2.ex}{$\sim$}}\ }}
\newcommand{\isoto}{\longisoto}
\newcommand{\hs}{\kern 0.8pt}
\newcommand{\hssh}{\kern 1.2pt}
\newcommand{\hshs}{\kern 1.6pt}
\newcommand{\hssss}{\kern 2.0pt}
\newcommand{\hm}{\kern -0.8pt}
\newcommand{\hmm}{\kern -1.2pt}
\newcommand{\emm}{\bfseries}
\newcommand{\X}{{{\sf X}}}
\newcommand{\Fbar}{{\ov F}}
\newcommand{\loc}{{\rm loc}}
\newcommand{\Vm}{{\mathrsfs V}}
\renewcommand{\G}{\Gamma}
\newcommand{\tors}{{\rm Tors}}
\def\g{\gamma }
\newcommand{\Gt}{{\G\hm,\hs\tors}}
\def\Gtf{{\G\!,\hs\tf}}
\newcommand{\Tors}{{\rm Tors}}
\newcommand{\tf}{{\rm\hs t.f.}}
\newcommand{\upgam}{{{}^\gamma\hskip-1.0pt}}
\def\Gw{{\G\!_w}}
\def\Gwt{{\Gw,\hs\Tors}}
\def\SC{{S^\complement}}
\newcommand{\Tor}{\mathrm{Tor}}
\newcommand{\Lam}{{\Lambda}}
\newcommand{\rsa}{\rightsquigarrow}
\newcommand{\Fs}{{F_s}}
\begin{document}

\title[Galois cohomology]
{Criterion for surjectivity of localization\\
in Galois cohomology of a reductive group\\
over a number field}

\author{Mikhail Borovoi}

\address{Borovoi: Raymond and Beverly Sackler School of Mathematical Sciences,
Tel Aviv University, 6997801 Tel Aviv, Israel}
\email{borovoi@tauex.tau.ac.il}

\address{%
Rosengarten: Einstein Institute of Mathematics,
The Hebrew University of Jerusalem,
Edmond J. Safra Campus, 91904 Jerusalem, Israel}
\email{zev.rosengarten@mail.huji.ac.il}

\subjclass{
  11E72% Galois cohomology of linear algebraic groups
, 20G10% Cohomology theory for linear algebraic groups
, 20G20% Linear algebraic groups over the reals, the complexes, the quaternions
, 20G25% Linear algebraic groups over local fields and their integers
, 20G30% Linear algebraic groups over global fields and their integers
}

\keywords{Linear algebraic group, number field, Galois cohomology, localization map, abelianization}

\thanks{The author was partially supported
by the Israel Science Foundation (grant 1030/22).
}
%\date{\today}

\begin{abstract}
Let $G$ be a connected reductive group over a number field $F$,
and let $S$ be a set (finite or infinite) of places of $F$.
We give a necessary and sufficient condition
for the surjectivity of the localization map from $H^1(F,G)$
to the ``direct sum'' of the sets $H^1(F_v,G)$ where $v$ runs over $S$.
In the appendices, we give a new construction of the abelian Galois cohomology
of a reductive group over a field of arbitrary characteristic.
\\

\noindent{\sc R\'esum\'e.} Soit $G$ un groupe réductif connexe sur un corps de nombres $F$,
et soit $S$ un ensemble (fini ou infini) de places de $F$.
On donne une condition nécessaire et suffisante
pour la surjectivité de l'application de localisation de $H^1(F,G)$
vers la ``somme directe'' des ensembles $H^1(F_v,G)$, où $v$ parcourt $S$.
Dans les appendices on donne une nouvelle construction
de la cohomologie galoisienne ab\'elienne
d'un groupe r\'eductif sur un corps de caract\'eristique quelconque.
\end{abstract}

\dedicatory{With an appendix by Zev Rosengarten}

\maketitle

%\tableofcontents

%\setcounter{subsection}{0}

\section{Introduction}
\label{s:intro}

\begin{subsecc}
Let $G$ be a (connected) reductive group over a number field $F$
(we follow the convention of SGA3, where reductive groups
are assumed to be connected).
Let $\Fbar$ be a fixed algebraic closure of $F$.
We denote by $\Vm(F)$ the set of places of $F$.
For $v\in\Vm(F)$, we denote by $F_v$ the completion of $F$ at $v$.
We refer to Serre's book \cite{Serre}
for the definition of the first Galois cohomology set $H^1(F,G)$.

In general, $H^1(F,G)$ is just a pointed set and has no natural groups structure.
Let $H^1_\ab(F,G)$ denote the {\em abelian Galois cohomology group} of $G$
introduced in \cite[Section 2]{Borovoi-Memoir};
see also Labesse \cite[Section 1.3]{Labesse}.
This is an abelian group depending functorially on $G$ and $F$.
There is a canonical {\em abelianization map}
\[\ab\colon H^1(F,G)\to H^1_\ab(F,G).\]
We give a new, better construction of $H^1_\ab(F,G)$ in Appendix \ref{s:ab}.

Let $S\subseteq \Vm(F)$ be a subset (finite or infinite).
We consider the  localization map
\begin{align}\label{e:loc-S-ab-prod}
 H^1_\ab(F,G)\to\prod_{v\in S} H^1_\ab(F_v,G).
\end{align}
In fact this map takes values in the subgroup $\bigoplus_{v\in S} H^1_\ab(F_v,G)\subseteq \prod_{v\in S} H^1_\ab(F_v,G)$;
see \cite[Corollary 4.6]{Borovoi-Memoir}.
Thus we obtain a localization map
\begin{equation}\label{e:loc-S-ab}
\loc_S^\ab\colon H^1_\ab(F,G)\to\bigoplus_{v\in S} H^1_\ab(F_v,G).
\end{equation}
Similarly, consider the localization map
\begin{equation*}
 H^1(F,G)\to\prod_{v\in S} H^1(F_v,G).
\end{equation*}
In fact it takes values in the subset $\bigoplus_{v\in S} H^1(F_v,G)$ consisting
of the families $(\xi_v)_{v\in S}$
with $\xi_v\in H^1(F_v,G)$ and such that $\xi_v=1$ for all $v$
except maybe finitely many of them.
This well-known fact follows, for instance,
from the corresponding assertion for \eqref{e:loc-S-ab-prod}
together with \cite[Theorem 5.11 and Corollary 5.4.1]{Borovoi-Memoir}.
Thus we obtain a localization map
\begin{equation}\label{e:loc-S}
\loc_S\colon H^1(F,G)\to\bigoplus_{v\in S} H^1(F_v,G).
\end{equation}
We wish to find conditions under which the localization maps \eqref{e:loc-S-ab} and \eqref{e:loc-S} are surjective.
\end{subsecc}

\begin{subsecc}
We denote by $M=\pi_1(G)$ the {\em algebraic fundamental group of $G$}
(also known as the Borovoi fundamental group of $G$)
introduced in \cite[Section 1]{Borovoi-Memoir},
and also introduced by Merkurjev \cite[Section 10.1]{Merkurjev}
and Colliot-Th\'el\`ene \cite[Proposition-Definition 6.1]{CT-flasque}.
See Subsection \ref{ss:pi1} for our definition of $\pi_1 (G)$.
This is a finitely generated abelian group,
on which the absolute Galois group $\Gal(\Fbar/F)$ naturally acts.
Let $E/F$ be a finite Galois extension in $\Fbar$ such that
$\Gal(\Fbar/E)$ acts on $M$ trivially and that $E$ has no real places.
Then the Galois group $\G\coloneqq\Gal(E/F)$ naturally acts on $M$
and on the set of places $\Vm(E)$ of the field $E$.
\end{subsecc}

\begin{subsecc}
We denote by $\Ch^1_S(F,G)$ the cokernel of the homomorphism \eqref{e:loc-S-ab}, that is,
\[ \Ch^1_S(F,G)=\coker\bigg[\loc_S^\ab \colon
     H^1_\ab(F,G)\to\bigoplus_{v\in S} H^1_\ab(F_v,G)\bigg].\]
After explaining our notation in Section \ref{s:notation},
we compute  in Section \ref{s:main} the finite abelian group $\Ch^1_S(F,G)$
in terms of the action of $\G$ on $M$ and on $\Vm(E)=\Vm_f(E)\cup\Vm_\C(E)$;
see Corollary \ref{c:main-ab}.
See Subsection \ref{ss:mt1} for the notations $\Vm_f$ and $\Vm_\C$.

Concerning the map $\loc_S$ of \eqref{e:loc-S}, in Section \ref{s:main} we
compute the image of this map; see Main Theorem \ref{t:main}.
Using this result, we give a {\em criterion} (necessary and sufficient condition)
for the map $\loc_S$ to be surjective; see Corollary \ref{c:main}.
This is also a criterion for the vanishing of $\Ch^1_S(F,G)$.
Again, our criterion is given in terms of the action
of $\G$ on $M$ and on $\Vm(E)=\Vm_f(E)\cup\Vm_\C(E)$.
Using this criterion,  we give a simple proof
of the result of Borel and Harder \cite[Theorem 1.7]{Borel-Harder}
(see also Prasad and Rapinchuk \cite[Proposition 1]{Prasad-Rapinchuk}\hs)
on the surjectivity of the map $\loc_S$ when $G$ is semisimple
and  there exists a finite place $v_0$ of $F$ outside $S$;
see Proposition \ref{p:semisimple} below.

Let $\Gamma$ be a finite group.
In Section \ref{s:exact}, we construct
an exact sequence arising
from a short exact sequence of $\Gamma$-modules.
In Section \ref{s:PR}, using this exact sequence
and Main Theorem \ref{t:main},
we generalize a result of Prasad and Rapinchuk
giving a sufficient condition for the surjectivity
of the localization map $\loc_S$ when $G$ is reductive,
in terms of the radical (largest central torus) of $G$;
see Theorem \ref{t:PR-generalized}.
As a particular case, we obtain the following corollary.
\end{subsecc}

\begin{corollary}[of Theorem \ref{t:PR-generalized}]
\label{c:Prasad-Rapinchuk}
Let $G$ be a reductive group over a number field $F$,
and let $C$ denote the radical of $G$
(the identity component of the center of $G$).
Let $S\subset \Vm(F)$ be a set of places of $F$.
Assume that the $F$-torus $C$ splits
over a finite Galois extension of $F$ of prime degree $p$
and that there exists a finite place  $v_0$
in the complement $\SC\coloneqq \Vm(F)\smallsetminus S$ of $S$
such that $C$ does not split over $F_{v_0}$\hs.
Then the localization map $\loc_S$ of \eqref{e:loc-S} is surjective.
\end{corollary}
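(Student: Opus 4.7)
The plan is to deduce this corollary directly from Theorem \ref{t:PR-generalized}, using a rigidity argument forced by the primality of the degree of the splitting extension.

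First I would fix the splitting extension: let $E/F$ be a Galois extension of prime degree $p$ over which the radical $C$ splits, and set $\G \coloneqq \Gal(E/F) \cong \Z/p\Z$. Choose a place $w_0$ of $E$ above the given finite place $v_0 \in \SC$. The hypothesis that $C$ does not split over $F_{v_0}$ means exactly that the decomposition group $\G_{w_0}\subseteq \G$ acts nontrivially on $X^*(C)$ (equivalently on $X_*(C)$), and in particular is nontrivial. Since $\G$ has prime order $p$, any nontrivial subgroup is all of $\G$, so $\G_{w_0} = \G$.

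Next I would feed this into Theorem \ref{t:PR-generalized}. That theorem, arising from the short exact sequence $1\to C\to G\to G/C\to 1$ combined with the exact sequence of $\G$-modules constructed in Section \ref{s:exact} and the Main Theorem of Section \ref{s:main}, imposes a hypothesis phrased in terms of the $\G$-action on (cohomology built from) the cocharacter lattice of $C$ and the decomposition group at a finite place $v_0 \in \SC$. The equality $\G_{w_0} = \G$ gives the strongest possible compatibility between global and local $\G$-actions: restriction from $\G$ to $\G_{w_0}$ is the identity on any $\G$-module constructed from $C$. Thus the hypothesis of Theorem \ref{t:PR-generalized} is satisfied, and the theorem yields surjectivity of $\loc_S$.

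The main (and really the only) obstacle is matching the precise form of the hypothesis in Theorem \ref{t:PR-generalized} to the data of the corollary. I expect this matching to be essentially mechanical: once one observes that $\G_{w_0} = \G$, any cohomological vanishing or surjectivity condition at $v_0$ formulated in terms of the restriction map from $\G$ to $\G_{w_0}$ is automatic. No computation beyond the group-theoretic fact that a nontrivial subgroup of $\Z/p\Z$ is the whole group is required.
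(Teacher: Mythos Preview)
Your core idea --- primality of $p$ forces the decomposition group to be the whole Galois group --- is exactly the paper's idea, but there is a technical mismatch you glossed over. You take $E$ to be the degree-$p$ splitting field of $C$ and $\Gamma=\Gal(E/F)$. However, Theorem~\ref{t:PR-generalized} is stated for the specific $\Gamma=\Gal(E/F)$ of Subsection~\ref{ss:local-results}, where $E$ is chosen so that $\Gal(\Fbar/E)$ acts trivially on all of $M=\pi_1(G)$ (not just on $M_C$) and $E$ has no real places. In general your $E$ will be strictly smaller, and your equality $\Gamma_{w_0}=\Gamma$ is not literally the hypothesis \eqref{e:Z-Gw-G}. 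The bridge is easy --- the action of the correct $\Gamma$ on $M_C$ factors through your $\Z/p\Z$, and decomposition groups surject under Galois quotients, so $\im[\Gamma_w\to\Aut M_C]$ and $\im[\Gamma\to\Aut M_C]$ coincide with (the images of) your $\Gamma'_{w_0}$ and $\Gamma'$ respectively --- but you should say this rather than label it ``mechanical.''

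The paper's proof sidesteps this by never changing $\Gamma$: it works with the $\Gamma$ of Theorem~\ref{t:PR-generalized} throughout and argues directly that $\im[\Gamma_w\to\Aut M_C]$ is a nontrivial subgroup of $\im[\Gamma\to\Aut M_C]$, both of order dividing $p$, hence equal. This is cleaner because it avoids juggling two Galois groups, though it is really the same argument.
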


For $p=2$ this assertion was earlier proved
by Prasad and Rapinchuk \cite[Proposition 2(b)]{Prasad-Rapinchuk}.

\begin{subsecc}
Let $G$ be a reductive group over a field $F$ of  characteristic 0.
In \cite{Borovoi-Memoir}, the author defined
the abelian group $H^1_\ab(F,G)$ {\em as a set}
in a canonical way as the Galois hypercohomology of a certain crossed module.
However, the definition of the structure of abelian group
on $H^1_\ab(F,G)$ in \cite{Borovoi-Memoir} was complicated.
In Appendix \ref{s:ab}, we define $H^1_\ab(F,G)$  (in arbitrary characteristic)
following the letter of Breen to the author \cite{Breen}
and the article by Noohi \cite{Noohi} (written at the author's request),
as the Galois hypercohomology $\H^1(F,G_\ab)$
of a  certain {\em stable} crossed module, that is,
a crossed module endowed with a symmetric braiding.
The structure of abelian group comes from the symmetric braiding.
Note that our specific crossed module and specific symmetric braiding
were constructed by Deligne \cite{Deligne}.

In Appendix \ref{s:Zev},
Zev Rosengarten shows that certain equivalences of crossed modules
of algebraic groups over a field $F$ of arbitrary characteristic
induce equivalences on $\Fs$-points where $\Fs$ is a separable closure of $F$.
This permits us to use in Appendix \ref{s:ab}
the {\em Galois} hypercohomology of these crossed modules
rather than fppf hypercohomology.
\end{subsecc}

\section{Notation}
\label{s:notation}

\begin{subsecc}
Let $A$ be an abelian group. We denote by $A_\Tors$ the torsion subgroup of $A$.
We set $A_\tf=A/A_\Tors$\hs, which is a torsion-free group.
\end{subsecc}

\begin{subsecc}
Let $\G$ be a finite group, and let $B$ be a $\G$-module.
We denote by $B_\G$ the group of coinvariants of $\G$ in $B$, that is,
\[ B_\G=B\hs \big/\bigg\{\sum_{\gamma\in \G}(\,{}^{\gamma^{-1}}\!b_\gamma-b_\gamma\,)
\ \big|\ b_\gamma\in B\bigg\}.\]
We write $B_\Gt\coloneqq (B_\G)_\Tors$ (which is the torsion subgroup of $B_\G$),
\ $B_\Gtf=B_\G/ B_\Gt$\ (which is a torsion-free group).
\end{subsecc}

\begin{subsecc}\label{ss:pi1}
Let $G$ be a reductive group over a field $F$.
Let $[G,G]$ denote the commutator subgroup of $G$, which is semisimple.
Let  $G^\ssc$ denote the universal cover of $[G,G]$,
which is simply connected;
see \cite[Proposition (2.24)(ii)]{Borel-Tits-C}
or \cite[Corollary A.4.11]{CGP}.
Following Deligne \cite[Section 0.2]{Deligne}, we consider the composite homomorphism
\[\rho\colon G^\ssc\onto [G,G]\into G,\]
which in general is neither injective nor surjective.

For a maximal torus  $T\subseteq G$, we write $T^\ssc=\rho^{-1}(T)\subseteq G^\ssc$
and consider the natural homomorphism
\[\rho\colon T^\ssc\to T.\]
We consider the  algebraic fundamental group $M=\pi_1 (G)$
of $G$ defined by
\[\pi_1 (G)=\X_*(T)/\rho_*\X_*(T^\ssc)\]
where $\X_*$ denotes the cocharacter group.
The Galois group $\Gal(\Fs/F)$ naturally acts on $M$,
and the $\Gal(\Fs/F)$-module $M$
is well defined (does not depend on the choice of $T$
up to a transitive system of isomorphisms);
see \cite[Lemma 1.2]{Borovoi-Memoir}.
\end{subsecc}

\begin{subsecc}\label{ss:mt1}
From now on  (except for the appendices), $F$ is  a number field.
We denote by  $\Vm(F)$, $\Vm_f(F)$, $\Vm_\infty(F)$, $\Vm_\R(F)$, and $\Vm_\C(F)$
the sets of all places of $F$,  of finite places, of infinite places, of real places,
and of complex places, respectively.

Let $E/F$ be a finite Galois extension of number fields with Galois group $\G=\Gal(E/F)$;
then $\G$ acts on $\Vm(E)$.
If $w\in\Vm(E)$, we write $\Gw$ for the stabilizer of $w$ in $\G$;
then $\Gw\cong \Gal(E_w/F_v)$ where $v\in\Vm(F)$ is the restriction of $w$ to $F$.
\end{subsecc}

\section{Main theorem}
\label{s:main}

In this section we state and prove Main Theorem \ref{t:main}
computing the images of the localization maps \eqref{e:loc-S-ab} and \eqref{e:loc-S}.
We deduce Corollary \ref{c:main-ab} computing the group $\Ch^1_S(F,G)$,
and Corollary \ref{c:main} giving a necessary and sufficient condition
for the surjectivity of the localization map \eqref{e:loc-S}.

\begin{subsecc}\label{ss:local-results}
Let $G$ be a  reductive group over a number field $F$,
and let $v\in\Vm_f(F)$ be a finite place of $F$.
In \cite{Borovoi-Memoir} we computed $H^1_\ab(F_v,G)$.
Write $M=\pi (G)$.
Let $E/F$ be a finite Galois extension  in $\Fbar$
such that $\Gal(\Fbar/E)$ acts on $M$ trivially and that $E$ has no real places.
Write $\G=\Gal(E/F)$.
\end{subsecc}

\begin{theorem} [{\cite[Proposition 4.1(i) and Corollary 5.4.1]{Borovoi-Memoir}}]
\label{t:4.1-Memoir}
With the notation and assumptions of Subsection \ref{ss:local-results},
for any {\emm finite} place $v$ of $F$
there is a canonical isomorphism of abelian groups
\[ \alpha_v^\ab\colon H^1_\ab(F_v,G)\isoto M_\Gwt\]
where $w$ is a place of $E$ over $v$, and a canonical bijection
\[\ab_v\colon H^1(F_v,G)\to H^1_\ab(F_v,G).\]
\end{theorem}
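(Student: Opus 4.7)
The plan is to prove both assertions by passing to a maximal torus and applying Tate--Nakayama duality, then extracting injectivity/surjectivity of $\ab_v$ from Kneser-type vanishing theorems.

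\emph{Step 1: Computation of $H^1_\ab(F_v,G)$.} By the construction in Appendix~\ref{s:ab}, $H^1_\ab(F_v,G)$ is the Galois hypercohomology of the crossed module $\rho\colon G^\ssc\to G$ placed in degrees $-1,0$. Choosing a maximal $F_v$-torus $T\subseteq G$ with preimage $T^\ssc=\rho^{-1}(T)\subseteq G^\ssc$, the inclusion of crossed modules $[T^\ssc\to T]\hookrightarrow[G^\ssc\to G]$ is a quasi-isomorphism, so
\[
H^1_\ab(F_v,G)\;\cong\;\mathbb{H}^1\bigl(F_v,\,T^\ssc\to T\bigr).
\]
Enlarging $E$ if necessary so that $T$ splits over $E$, one has $\Gw=\Gal(E_w/F_v)$, and the complex of cocharacter lattices $[X_*(T^\ssc)\to X_*(T)]$ is quasi-isomorphic to $M[0]$ by the definition of $\pi_1(G)$.

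\emph{Step 2: Tate--Nakayama for the two-term complex.} Apply Tate--Nakayama duality to the complex of induced tori (after restricting scalars from $E_w$ to $F_v$). In the single-torus case this identifies $H^1(F_v,T)$ with $\widehat H^{-1}(\Gw,X_*(T))$; the version for length-two complexes yields
\[
\mathbb{H}^1\bigl(F_v,\,T^\ssc\to T\bigr)\;\cong\;\widehat{\mathbb{H}}^{-1}\bigl(\Gw,\,X_*(T^\ssc)\to X_*(T)\bigr)\;\cong\;\widehat H^{-1}(\Gw,M),
\]
the last isomorphism coming from the quasi-isomorphism of cocharacter complexes with $M[0]$. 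To land in $M_{\Gw,\tors}$, one verifies the general identity
$\widehat H^{-1}(\Gw,M)=M_{\Gw,\tors}$: this is classical for torsion-free $M$ (the kernel of the norm $M_\Gw\to M^\Gw$ coincides with the torsion since $M^\Gw$ is torsion-free), and reduces to that case for arbitrary finitely generated $M$ via the exact sequence $0\to M_\Tors\to M\to M_\tf\to 0$ and a diagram chase. Composing these isomorphisms gives the canonical $\alpha_v^\ab$, whose canonicity (independence of $T$ and $E$) follows from \cite[Lemma 1.2]{Borovoi-Memoir} and functoriality of all constructions.

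\emph{Step 3: Bijectivity of $\ab_v$.} The crossed-module machinery of Appendix~\ref{s:ab} produces a functorial exact sequence of pointed sets
\[
H^1(F_v,G^\ssc)\;\longrightarrow\;H^1(F_v,G)\;\labelto{\ab_v}\;H^1_\ab(F_v,G)\;\longrightarrow\;H^2(F_v,G^\ssc).
\]
Kneser's theorem gives $H^1(F_v,G^\ssc)=1$, and $H^2(F_v,G^\ssc)=1$ over a non-archimedean local field (Kneser for classical types, Bruhat--Tits/Douai in general). This yields surjectivity of $\ab_v$ immediately. For injectivity, apply the standard twisting argument: given $\xi,\xi'\in H^1(F_v,G)$ with $\ab_v(\xi)=\ab_v(\xi')$, twist the whole crossed module by a cocycle representing $\xi$; the twist ${}_\xi G^\ssc$ is again simply connected semisimple (inner twisting preserves the isogeny type), so Kneser applies again and the fiber of $\ab_v$ through $\xi$ is a singleton.

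The principal technical obstacle is Step~2: formulating Tate--Nakayama duality for a two-term complex of (possibly non-split) tori and matching the result, via the quasi-isomorphism of cocharacter complexes, with $\widehat H^{-1}(\Gw,M)$ when $M=\pi_1(G)$ may carry torsion. Once that duality and the identification $\widehat H^{-1}(\Gw,M)=M_{\Gw,\tors}$ are in hand, the rest is essentially formal.
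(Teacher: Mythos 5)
Your Step 1 is fine, and Step 3 is in outline the standard argument (Kneser's vanishing plus twisting for injectivity; for surjectivity one needs Springer--Douai non-abelian $H^2$ and the theorem that every class with simply connected band over a $p$-adic field is neutral, rather than a literal ``$H^2(F_v,G^\ssc)=1$'', but that is repairable). The genuine gap is Step 2, which you yourself identify as the technical heart: both of its key claims are false. The identity $\wh H^{-1}(\Gw,M)=M_\Gwt$ fails whenever $M$ has torsion that $\Gw$ does not see: if $\Gw=\{1\}$, then $\wh H^{-1}(\Gw,M)=0$ while $M_\Gwt=M_\Tors$; in general there is only an inclusion $\wh H^{-1}(\Gw,M)\subseteq M_\Gwt$ (exactly the distinction the paper draws at real places in Subsection \ref{ss:real}), and no d\'evissage along $0\to M_\Tors\to M\to M_\tf\to 0$ can prove a false statement. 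Likewise the proposed ``Tate--Nakayama for the two-term complex'', $\H^1(F_v,T^\ssc\to T)\cong\wh{\H}^{-1}\big(\Gw,\X_*(T^\ssc)\to\X_*(T)\big)$, is false. Concretely, take $G=\mathrm{PGL}_2$ over $F=\Q$, $E=\Q(i)$, and $v=p$ a prime split in $E$, so $\Gw=\{1\}$ and the right-hand side vanishes; but here $T^\ssc\to T$ is the squaring map on $\GG_{\mathrm m}$, which on $\Fs$-points is surjective with kernel $\mu_2$, so the complex is quasi-isomorphic to $\mu_2$ placed in degree $-1$ and $\H^1(F_v,T^\ssc\to T)\cong H^2(\Q_p,\mu_2)\cong\Z/2\Z$ --- which is indeed $M_\Gwt$, as the theorem asserts, but is not $\wh H^{-1}(\Gw,M)=0$. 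Note that the theorem you are proving is cited by the paper from \cite{Borovoi-Memoir} without proof, so your argument has to stand on its own, and as written it computes the wrong group.

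The failure is structural, not a matter of bookkeeping: cup product with the fundamental class of the finite quotient $\Gw$ only computes $\Gw$-cohomology of $E_w$-points, and for a length-two complex the contribution of $H^2(E_w,T^\ssc)\neq 0$ (Brauer classes of $E_w$) is invisible to it. For a single torus, Hilbert 90 hides this defect, and since $\X_*(T)$ is torsion-free the equality of $\wh H^{-1}$ with the torsion of the coinvariants happens to hold --- which is why the na\"ive extension to two-term complexes looks plausible. The actual computation (Kottwitz \cite{Kottwitz-86}; \cite[Proposition 4.1]{Borovoi-Memoir}) runs duality over the full profinite group $\Gal(\Fbar_v/F_v)$, using the canonical class in $H^2(F_v,\GG_{\mathrm m})\cong\Q/\Z$ and the fact that the local Galois group has strict cohomological dimension $2$; this yields $\H^1(F_v,T^\ssc\to T)\isoto$ the torsion subgroup of the coinvariants of $\Gal(\Fbar_v/F_v)$ in $\X_*(T)/\rho_*\X_*(T^\ssc)=M$, which equals $M_\Gwt$ because $\Gal(\Fbar_v/E_w)$ acts trivially on $M$ --- with no passage through $\wh H^{-1}(\Gw,-)$. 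That input is precisely what your sketch is missing. (A smaller point: you enlarge $E$ so that $T$ splits, while the theorem fixes $E$; you then need to remark that $M_\Gwt$ depends only on the image of the local Galois group in $\Aut M$, which the enlargement does not change.)
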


\begin{subsecc}
Let $v$ be a finite place of $F$.
We have a surjective (even bijective) map
\[\alpha_v\colon H^1(F_v,G)\labelto{\ab_v} H^1_\ab(F_v,G)\labelto{\alpha_v^\ab} M_\Gwt\hs.\]
We consider two composite maps with the same image
\begin{align*}
\lambda_v^\ab\colon\,  &H^1_\ab(F_v,G)\labelto{\alpha_v^\ab} M_\Gwt\labelto{\omega_v} M_\Gt\hs,\\
\lambda_v\colon\,      &H^1(F_v,G)\labelto{\alpha_v} M_\Gwt\labelto{\omega_v} M_\Gt\hs,
\end{align*}
where\, $\omega_v\colon  M_\Gwt\to M_\Gt$\, is
the homomorphism induced by the inclusion $\Gw\into\G$.
Since the maps $\alpha_v^\ab$ and $\alpha_v$ are surjective (even bijective), and $\omega_v$ is a homomorphism,
we see that the set\, $\im\lambda_v^\ab=\im\lambda_v$\, is a subgroup of $M_\Gt$\hs,
namely,\, $\im\lambda_v^\ab=\im\lambda_v=\im\hs\omega_v$\hs.

Let $v\in\Vm_\C(F)$ be a complex place. We have zero maps
\[\lambda_v^\ab\colon H^1_\ab(F_v,G)=\{1\}\to \{0\}\subseteq M_\Gt\hs,\qquad
     \lambda_v\colon H^1(F_v,G)=\{1\}\to \{0\}\subseteq M_\Gt\hs.\]
Clearly, the set\, $\im\lambda^\ab_v=\im\lambda_v$\,
is a subgroup of $M_\Gt$\hs, namely, the subgroup  $\{0\}$.
\end{subsecc}

\begin{subsecc}\label{ss:real}
Let  $v\in\Vm_\R(F)$ be a real place; then $\Gw$ is a group of order 2,
$\Gw=\{1,\gamma\}$ where $\gamma=\gamma_w$ induces
 the nontrivial automorphism of $E_w$ over $F_v$\hs.
We consider the Tate cohomology group
\[\wh H^{-1}(\Gw,M)=\{m\in M\mid \upgam m=-m\}\hs/\hss\{m'-\upgam m'\mid m'\in M\}.\]
We see immediately that the abelian group $\wh H^{-1}(\Gw,M)$
naturally embeds into $M_{\Gw}$\hs.
If $m\in M$ is a $(-1)$-cocycle, that is, $\upgam m=-m$,
then $2m=m+m=m-\upgam m$, whence $2\cdot\wh H^{-1}(\Gw,M)=0$.
We conclude that  $\wh H^{-1}(\Gw,M)$ naturally embeds into $M_{\Gw,\Tors}$\hs.

There is a canonical surjective map of Kottwitz \cite[Theorem 1.2]{Kottwitz-86}
(see also \cite[Theorem 5.4]{Borovoi-Memoir})
\[\ab_v\colon H^1(F_v, G)\onto H^1_\ab(F_v,G),\]
a canonical isomorphism of \cite[Proposition 8.21]{BT-arXiv}
\[ H^1_\ab(F_v,G)\isoto \wh H^{-1}(\Gw, M),\]
and a canonical embedding
\[\wh H^{-1}(\Gw,M)\into M_\Gwt\hs.\]
Thus we obtain  composite maps
\begin{align*}
\alpha_v^\ab\colon &H^1_\ab(F_v,G)\isoto\wh H^{-1}(\Gw, M)\into M_\Gwt\hs,\\
\alpha_v\colon  H^1(F_v, G)\onto &H^1_\ab(F_v,G)\isoto\wh H^{-1}(\Gw, M)\into M_\Gwt\hs,
\end{align*}
with the same image\, $\im \alpha_v^\ab=\im\alpha_v$\hs,\,
which is a subgroup of $M_\Gwt$\hs.
Consider the composite maps with the same image
\begin{align*}
\lambda_v^\ab\colon\, &H^1_\ab(F_v, G)\labelto{\alpha_v^\ab}  M_\Gwt\labelto{\omega_v} M_\Gt\hs,\\
\lambda_v\colon\, &H^1(F_v, G)\labelto{\alpha_v}  M_\Gwt\labelto{\omega_v} M_\Gt\hs.
\end{align*}
Since  the set\, $\im\alpha_v^\ab=\im\alpha_v$\,
is a subgroup of $M_\Gwt$\hs, and $\omega_v$ is a homomorphism,
we conclude that the set\, $\im\lambda_v^\ab=\im\lambda_v$\, is a subgroup of $M_\Gt$\hs,
namely,\,  $\im\lambda_v^\ab=\im\lambda_v=\omega_v\big(\wh H^{-1}(\Gw,M)\hs\big)$.
\end{subsecc}

\begin{lemma}\label{ss:loc-sum}
Let $S\subseteq \Vm(F)$ be any subset, finite or infinite.
Consider the  summation maps
\begin{align*}
\Sigma_S^\ab\colon &\bigoplus_{v\in S} H^1_\ab(F_v,G)\,\lra\, M_\Gt,\quad
   \   \xi_S^\ab= \big(\xi_v^\ab\big)_{v\in S}\,\longmapsto\, \sum_{v\in S}\lambda_v^\ab(\xi_v^\ab),\\
\Sigma_S\colon &\bigoplus_{v\in S} H^1(F_v,G)\,\lra\, M_\Gt,\quad
   \ \ \xi_S= \big(\xi_v\big)_{v\in S}\,\longmapsto\, \sum_{v\in S}\lambda_v(\xi_v).
\end{align*}
Then the sets \,$\im\Sigma_S^\ab$ and \,$\im \Sigma_S$ are  subgroups of $M_\Gt$\hs, and they are equal.
\end{lemma}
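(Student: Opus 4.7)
The plan is to deduce both assertions from facts about the individual maps $\lambda_v$ and $\lambda_v^\ab$ that have already been established in the subsections preceding the lemma. The crucial inputs are: (a) each $\lambda_v^\ab$ is a homomorphism of abelian groups; (b) for every $v\in S$, the subset $\im\lambda_v\subseteq M_\Gt$ equals $\im\lambda_v^\ab$ and is in fact a subgroup of $M_\Gt$; and (c) each $\lambda_v$ sends the distinguished element $1\in H^1(F_v,G)$ to $0\in M_\Gt$.

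First I would dispose of $\Sigma_S^\ab$. Since it is the direct sum of the homomorphisms $\lambda_v^\ab$, it is itself a homomorphism of abelian groups, and hence $\im\Sigma_S^\ab$ is automatically a subgroup of $M_\Gt$. More precisely, it is the subgroup of $M_\Gt$ generated by $\bigcup_{v\in S}\im\lambda_v^\ab$, which, since each $\im\lambda_v^\ab$ is already a subgroup, coincides with the set of finite sums $\sum_{v\in S} a_v^\ab$ with $a_v^\ab\in\im\lambda_v^\ab$ and $a_v^\ab=0$ for almost all $v$.

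Next I turn to $\Sigma_S$. By definition, an element of $\bigoplus_{v\in S}H^1(F_v,G)$ is a family $(\xi_v)_{v\in S}$ with $\xi_v=1$ for all but finitely many $v$. Using (c), $\Sigma_S\bigl((\xi_v)\bigr)=\sum_{v\in S}\lambda_v(\xi_v)$ is a well-defined finite sum in $M_\Gt$, and $\im\Sigma_S$ consists precisely of the finite sums $\sum_{v\in S} a_v$ with $a_v\in\im\lambda_v$ and $a_v=0$ for almost all $v$. Applying (b) term by term yields
\[\im\Sigma_S\,=\,\sum_{v\in S}\im\lambda_v\,=\,\sum_{v\in S}\im\lambda_v^\ab\,=\,\im\Sigma_S^\ab,\]
which is a subgroup of $M_\Gt$ by the first step.

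The argument is essentially bookkeeping once the pointwise analysis of $\lambda_v$ and $\lambda_v^\ab$ has been carried out. The only substantive content I would flag is the verification of (c), which amounts to recalling that the maps $\alpha_v^\ab$ and $\alpha_v$ are pointed (sending the neutral class to $0$) in the finite, real, and complex cases alike; so I do not anticipate any real obstacle here.
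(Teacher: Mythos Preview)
Your proposal is correct and follows essentially the same approach as the paper. The paper's proof is a one-line display asserting $\im\Sigma_S^\ab=\im\Sigma_S=\langle\im\lambda_v\rangle_{v\in S}$ together with the case-by-case description of $\im\lambda_v$; your write-up unpacks exactly the same ingredients (each $\im\lambda_v=\im\lambda_v^\ab$ is a subgroup, $\lambda_v(1)=0$, and $\Sigma_S^\ab$ is a group homomorphism) and arrives at the same subgroup generated by the $\im\lambda_v$.
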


\begin{proof}
Indeed, we have
\[\im\Sigma^\ab_S=\im\Sigma_S=\big\langle \im\lambda_v\big\rangle_{v\in S}
    \quad\ \text{where}\ \,\im\lambda_v=
\begin{cases}
\im\omega_v                             &\text{if $v\in\Vm_f(F)$,}\\
\omega_v\big(\wh H^{-1}(\Gw,M)\hs\big) &\text{if $v\in\Vm_\R(F)$,}\\
0                                       &\text{if $v\in\Vm_\C(F)$.}
\end{cases}
\]
Here we write\, $\big\langle \im\lambda_v\big\rangle_{v\in S}$\,
for the subgroup of $M_\Gt$ generated by the subgroups\, $\im\lambda_v$\, for $v\in S$.
\end{proof}

\begin{theorem}
\label{t:Kottwitz}
The following sequences are  exact:
\begin{align}
&H^1_\ab(F,G)\labeltooo{\loc_{\Vm}^\ab}\bigoplus_{v\in \Vm} H^1_\ab(F_v,G)
       \labeltooo{\Sigma_{\Vm}^\ab} M_\Gt\hs,\label{a:1}
\\
&H^1(F,G)\labeltooo{\loc_{\Vm}}\bigoplus_{v\in \Vm} H^1(F_v,G)
       \labeltooo{\Sigma_{\Vm}} M_\Gt\hs,\label{a:2}
\end{align}
where for brevity we write $\Vm$ for $\Vm(F)$.
\end{theorem}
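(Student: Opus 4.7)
The plan is to deduce the exact sequence \eqref{a:1} from Poitou--Tate--Kottwitz duality for the fundamental group $M=\pi_1(G)$, and then to upgrade this abelian exactness to \eqref{a:2} via the surjectivity of the abelianization map with prescribed archimedean data. Exactness of \eqref{a:1} is essentially a restatement of the Kottwitz--Borovoi Poitou--Tate exact sequence for abelianized Galois cohomology, obtained by combining \cite[Theorem~5.11 and Corollary~5.4.1]{Borovoi-Memoir}. Under the identification of $H^1_\ab(F_v,G)$ with (a subgroup of) $M_\Gwt$ via $\alpha_v^\ab$ (Theorem~\ref{t:4.1-Memoir} for $v$ finite, Subsection~\ref{ss:real} for $v$ real, and the trivial case for $v$ complex), the map $\Sigma_{\Vm}^\ab$ becomes the sum of the transfers $\omega_v\colon M_\Gwt\to M_\Gt$, and the kernel of this sum modulo the global image vanishes by Tate--Nakayama duality applied to $M$.

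For the composition $\Sigma_{\Vm}\circ\loc_{\Vm}$ in \eqref{a:2}, I use the functoriality of $\ab$ together with the factorisation $\lambda_v=\lambda_v^\ab\circ\ab_v$ recorded in Subsection~\ref{ss:real}, which gives
\[\Sigma_{\Vm}(\loc_{\Vm}(\xi))=\sum_{v}\lambda_v^\ab(\ab_v(\loc_v(\xi)))=\Sigma_{\Vm}^\ab(\loc_{\Vm}^\ab(\ab(\xi)))=0\]
for every $\xi\in H^1(F,G)$, by exactness of \eqref{a:1}.

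For the reverse inclusion, take $(\xi_v)\in\bigoplus_{v\in\Vm}H^1(F_v,G)$ satisfying $\Sigma_{\Vm}((\xi_v))=0$, and set $\xi_v^\ab\coloneqq\ab_v(\xi_v)$. Then $(\xi_v^\ab)\in\ker\Sigma_{\Vm}^\ab$, so by \eqref{a:1} there is $\eta^\ab\in H^1_\ab(F,G)$ with $\loc_v^\ab(\eta^\ab)=\xi_v^\ab$ for every $v$. I now invoke the Kottwitz--Borovoi theorem on the surjectivity of abelianization with prescribed archimedean behaviour, asserting that the natural map
\[H^1(F,G)\lra H^1_\ab(F,G)\times_{\prod_{v\in\Vm_\infty(F)}H^1_\ab(F_v,G)}\prod_{v\in\Vm_\infty(F)}H^1(F_v,G)\]
is surjective. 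The pair $(\eta^\ab,(\xi_v)_{v\in\Vm_\infty(F)})$ lies in the fibre product because $\ab_v(\xi_v)=\xi_v^\ab=\loc_v^\ab(\eta^\ab)$ for each archimedean $v$, so it lifts to some $\eta\in H^1(F,G)$ with $\ab(\eta)=\eta^\ab$ and $\loc_v(\eta)=\xi_v$ at every archimedean place. For a finite $v$ the map $\ab_v$ is bijective (Theorem~\ref{t:4.1-Memoir}), so $\ab_v(\loc_v(\eta))=\loc_v^\ab(\eta^\ab)=\ab_v(\xi_v)$ forces $\loc_v(\eta)=\xi_v$. Thus $\loc_{\Vm}(\eta)=(\xi_v)$, as required.

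The main obstacle is precisely the last step: the global lift $\eta$ must agree with the \emph{given} local classes $\xi_v$ at real places, where $\ab_v$ has nontrivial fibres. Bare surjectivity of $\ab\colon H^1(F,G)\to H^1_\ab(F,G)$ is insufficient; the argument rests essentially on the strengthened Kottwitz conjecture (proved in \cite[Theorem~5.11]{Borovoi-Memoir}) permitting the simultaneous prescription of the global abelianization and of the archimedean local classes. Everything else is bookkeeping around the compatibility $\lambda_v=\lambda_v^\ab\circ\ab_v$ and the bijectivity of $\ab_v$ at non-archimedean places.
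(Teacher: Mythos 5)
Your proposal is correct in substance, but it reaches \eqref{a:2} by a genuinely different route than the paper, whose proof is purely by citation: \eqref{a:1} is extracted from the Poitou--Tate-type exact sequence for abelian Galois cohomology in \cite[Proposition 4.8 and (4.3.1)]{Borovoi-Memoir}, and \eqref{a:2} is quoted directly from Kottwitz \cite[Proposition 2.6]{Kottwitz-86} or \cite[Theorem 5.15]{Borovoi-Memoir}. You instead take only the abelian sequence \eqref{a:1} as external input and re-derive \eqref{a:2} from it: the vanishing of the composite follows from the factorisation $\lambda_v=\lambda_v^\ab\circ\ab_v$ together with the compatibility $\ab_v\circ\loc_v=\loc_v^\ab\circ\ab$; conversely, a family killed by $\Sigma_{\Vm}$ is first lifted to a global abelian class by \eqref{a:1}, then to a class in $H^1(F,G)$ by the surjectivity of $H^1(F,G)$ onto the fibre product of $H^1_\ab(F,G)$ with the archimedean sets $H^1(F_v,G)$, and the finite components are matched using the bijectivity of $\ab_v$ there (Theorem \ref{t:4.1-Memoir}). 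This is sound and non-circular --- it essentially reproduces the proof of the quoted nonabelian theorem in \cite{Borovoi-Memoir} --- and it buys a self-contained reduction of the nonabelian statement to the abelian one, at the price of invoking the nontrivial fibre-product surjectivity theorem; you are right that bare surjectivity of $\ab$ would not suffice at the real places, where $\ab_v$ has nontrivial fibres. One correction on references: the abelian sequence \eqref{a:1} is not obtained from \cite[Theorem 5.11 and Corollary 5.4.1]{Borovoi-Memoir}, which concern the abelianization map $H^1\to H^1_\ab$ rather than the abelian localization sequence; the relevant source is the computation in Section 4 of \cite{Borovoi-Memoir} (Proposition 4.8 and the exact sequence (4.3.1)), which is exactly the Tate--Nakayama/Poitou--Tate input your sketch for \eqref{a:1} gestures at.
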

Here \eqref{a:1} is an exact sequence of abelian groups,
and \eqref{a:2} is an exact sequence of pointed sets.
\begin{proof}
In view of \cite[Proposition 4.8]{Borovoi-Memoir},  exact sequence \eqref{a:1}
is actually a part of the exact sequence \cite[(4.3.1)]{Borovoi-Memoir}.
For \eqref{a:2}, see \cite[Proposition 2.6]{Kottwitz-86} or \cite[Theorem 5.15]{Borovoi-Memoir}.
\end{proof}

\begin{maintheorem}\label{t:main}
Let $G$ be a reductive group over a number field $F$.
Let $S\subseteq \Vm\coloneqq \Vm(F)$ be a subset.
Write $\SC=\Vm\smallsetminus S$, the complement of $S$ in $\Vm$. Then:
\begin{align}
\im\hs \loc_S^\ab &=\bigg\{\xi_S^\ab\in \bigoplus_{v\in S} H^1_\ab(F_v,G)
       \ \big|\ \Sigma_S^\ab(\xi_S^\ab)\in
       \im\Sigma_S^\ab\hs\cap\hs\im\Sigma_\SC^\ab\bigg\},\label{a:main-1}\\
\im\hs \loc_S &=\bigg\{\xi_S\in \bigoplus_{v\in S} H^1(F_v,G)
       \ \big|\ \Sigma_S(\xi_S)\in\im\Sigma_S\hs\cap\hs\im\Sigma_\SC\bigg\}\label{a:main-2}.
\end{align}
\end{maintheorem}

\begin{proof}
By Lemma \ref{ss:loc-sum}, the sets $\im\Sigma_\SC^\ab$ and $\im\Sigma_\SC$
are (equal) subgroups of $M_\Gt$\hs, and therefore
it suffices to prove \eqref{a:main-1} with $\big(\!-\im\Sigma_\SC^\ab\,\big)$
instead of \,$\im\Sigma_\SC^\ab$\,, and to prove \eqref{a:main-2} with $\big(\!-\im\Sigma_\SC\hs\big)$
instead of \,$\im\Sigma_\SC$\hs.
Now the corresponding assertions follow easily
from the exactness of \eqref{a:1} and \eqref{a:2}, respectively.

For the reader's convenience, we provide an easy proof of \eqref{a:main-2}
with $\big(\!-\im\Sigma_\SC\hs\big)$ instead of \,$\im\Sigma_\SC$\hs.
Let
$$\xi_S=\big(\xi_v\big)_{v\in S}\,\in\,\im\loc_S\,\subseteq\,  \bigoplus_{v\in S} H^1(F_v,G),$$
that is,  $\xi_S=\loc_S(\xi)$ for some $\xi\in H^1(F,G)$.
Write $\eta_\SC=\big(\eta_v\big)_{v\in\SC}=\loc_\SC(\xi)$.
Since the sequence \eqref{a:2} is exact, we have $(\Sigma_\Vm\circ\loc_\Vm)(\xi)=0$, whence
\[ \Sigma_S(\xi_S)+\Sigma_\SC(\eta_\SC)=0\quad\ \text{and}\quad\ \Sigma_S(\xi_S)=-\Sigma_\SC(\eta_\SC).\]
We conclude that $\Sigma_S(\xi_S)\in \im\Sigma_S\cap\big(\!-\im \Sigma_\SC\big)$, as required.

Conversely, let an element  $\xi_S=\big(\xi_v\big)_{v\in S}\in \bigoplus_{v\in S} H^1(F_v,G)$
be such that $$\Sigma_S(\xi_S)\in \im\Sigma_S\hs\cap\big(\!-\im\Sigma_\SC\big).$$
Write $a=\Sigma_S(\xi_S)$. Then
$-a\in \im\Sigma_\SC$\hs, that is,
\[-a=\Sigma_\SC(\eta_\SC)\quad\text{for some}\quad \eta_\SC=
    \big(\eta_v\big)_{v\in\SC}\hs\in\hs \bigoplus_{v\in \SC} H^1(F_v,G).\]
Define
\[\zeta_{\Vm}=\big(\zeta_v\big)_{v\in \Vm}\,\in\, \bigoplus_{v\in \Vm} H^1(F_v,G), \quad\ \zeta_v=
\begin{cases}
\xi_v &\text{if $v\in S$,}\\
\eta_v &\text{if $v\in \SC$.}
\end{cases}
\]
Then
\[\Sigma_{\Vm}(\zeta_{\Vm})=a+(-a)=0.\]
Since the sequence \eqref{a:2} is exact, we have
$\zeta_\Vm=\loc_\Vm(\zeta)$ for some $\zeta\in H^1(F,G)$.
Then $\loc_S(\zeta)=\xi_S$, whence $\xi_S\in\im\loc_S$, as required.
\end{proof}

\begin{corollary}\label{c:main-ab}
The  homomorphism
\[\chi_S^\ab\colon \bigoplus_{v\in\Vm} H^1_\ab(F_v,G)\labelto{\Sigma_S^\ab} \im\Sigma_S^\ab
     \lra  \im \Sigma_S^\ab/\big(\im\Sigma_S^\ab\cap\im\Sigma_\SC^\ab\big)\]
induces a canonical isomorphism
\[\Ch^1_S(F,G)\isoto \im \Sigma_S^\ab/\big(\im\Sigma_S^\ab\cap\im\Sigma_\SC^\ab\big).\]
\end{corollary}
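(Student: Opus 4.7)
The plan is to observe that Corollary \ref{c:main-ab} is essentially a reformulation of the abelian part of Main Theorem \ref{t:main}, so the entire proof reduces to identifying the kernel of $\chi_S^\ab$ with $\im\hs\loc_S^\ab$ and observing that $\chi_S^\ab$ is surjective.

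First, I would check surjectivity of $\chi_S^\ab$. By construction, $\chi_S^\ab$ is the composition of $\Sigma_S^\ab$ (viewed as a map onto its image $\im\Sigma_S^\ab$) with the quotient map $\im\Sigma_S^\ab\onto \im\Sigma_S^\ab/\bigl(\im\Sigma_S^\ab\cap\im\Sigma_\SC^\ab\bigr)$. Both factors are surjective, so $\chi_S^\ab$ is surjective.

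Second, I would compute $\ker\chi_S^\ab$. By definition, an element $\xi_S^\ab\in\bigoplus_{v\in S}H^1_\ab(F_v,G)$ lies in $\ker\chi_S^\ab$ if and only if $\Sigma_S^\ab(\xi_S^\ab)\in \im\Sigma_S^\ab\cap\im\Sigma_\SC^\ab$. But the right-hand side of \eqref{a:main-1} in Main Theorem \ref{t:main} states precisely that this condition is equivalent to $\xi_S^\ab\in\im\loc_S^\ab$. Hence $\ker\chi_S^\ab=\im\loc_S^\ab$.

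Combining these, $\chi_S^\ab$ factors through the quotient
\[
\Ch^1_S(F,G)=\bigoplus_{v\in S}H^1_\ab(F_v,G)\,\big/\,\im\loc_S^\ab
\]
and induces an injection of $\Ch^1_S(F,G)$ into $\im\Sigma_S^\ab/\bigl(\im\Sigma_S^\ab\cap\im\Sigma_\SC^\ab\bigr)$; surjectivity of $\chi_S^\ab$ promotes this to an isomorphism. The only step requiring genuine content is the identification $\ker\chi_S^\ab=\im\loc_S^\ab$, and this is precisely what Main Theorem \ref{t:main} provides; with that theorem in hand, the remaining work is purely formal, so I do not expect any real obstacle.
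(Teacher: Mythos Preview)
Your proposal is correct and matches the paper's own proof essentially verbatim: the paper's argument is simply ``The homomorphism $\chi_S^\ab$ is clearly surjective, and by Theorem~\ref{t:main} its kernel is the image $\im\loc_S^\ab$; the corollary follows.'' You have just spelled out these two sentences in slightly more detail (and implicitly corrected the typo $\bigoplus_{v\in\Vm}$ to $\bigoplus_{v\in S}$ in the domain of $\chi_S^\ab$).
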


\begin{proof}
The homomorphism $\chi_S^\ab$ is clearly surjective,
and by Theorem \ref{t:main} its kernel is the image  $\im\loc_S^\ab$
of the localization homomorphism $\loc_S^\ab$ of \eqref{e:loc-S-ab}.
The corollary follows.
\end{proof}

\begin{corollary}\label{c:main}
The localization map $\loc_S$ of \eqref{e:loc-S} is surjective if and only if
\begin{equation}\label{e:main-formula}
\im \Sigma_S\,\subseteq\, \im\Sigma_\SC\hs.
\end{equation}
\end{corollary}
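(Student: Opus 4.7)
The plan is to deduce this corollary directly from Main Theorem \ref{t:main}, equation \eqref{a:main-2}, essentially by unwinding what surjectivity of $\loc_S$ means.

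First I would observe that $\loc_S$ is surjective if and only if every element
\[ \xi_S=(\xi_v)_{v\in S}\,\in\,\bigoplus_{v\in S}H^1(F_v,G) \]
lies in $\im\loc_S$. By \eqref{a:main-2}, this is equivalent to
\[ \Sigma_S(\xi_S)\,\in\,\im\Sigma_S\cap\im\Sigma_\SC\qquad\text{for every }\xi_S. \]
Since $\Sigma_S(\xi_S)\in\im\Sigma_S$ holds tautologically, the intersection condition collapses to $\Sigma_S(\xi_S)\in\im\Sigma_\SC$ for every $\xi_S$, which says precisely that $\im\Sigma_S\subseteq\im\Sigma_\SC$. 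This gives the ``only if'' direction.

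For the converse, assuming $\im\Sigma_S\subseteq\im\Sigma_\SC$, every $\xi_S$ satisfies
\[ \Sigma_S(\xi_S)\,\in\,\im\Sigma_S\,=\,\im\Sigma_S\cap\im\Sigma_\SC, \]
so \eqref{a:main-2} places $\xi_S$ in $\im\loc_S$; thus $\loc_S$ is surjective. Both $\im\Sigma_S$ and $\im\Sigma_\SC$ are subgroups of $M_\Gt$ by Lemma \ref{ss:loc-sum}, so no sign issues arise when replacing $\im\Sigma_\SC$ by $-\im\Sigma_\SC$ as in the proof of Main Theorem \ref{t:main}.

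There is no real obstacle here: all the work has been done in Main Theorem \ref{t:main}, and this corollary is a purely formal consequence. The only thing to verify carefully is that the ``tautological'' containment $\Sigma_S(\xi_S)\in\im\Sigma_S$ really is tautological, which is immediate from the definition of $\Sigma_S$ as the sum of the local maps $\lambda_v$ over $v\in S$.
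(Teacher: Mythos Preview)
Your proof is correct and follows essentially the same approach as the paper: both deduce the corollary directly from the description of $\im\loc_S$ in Main Theorem \ref{t:main}. The paper packages the argument by introducing the auxiliary map $\chi_S\colon \bigoplus_{v\in S}H^1(F_v,G)\to \im\Sigma_S/(\im\Sigma_S\cap\im\Sigma_\SC)$, notes that its kernel is $\im\loc_S$, and then runs through the equivalences $\loc_S$ surjective $\Leftrightarrow$ $\ker\chi_S$ is everything $\Leftrightarrow$ the quotient is trivial $\Leftrightarrow$ $\im\Sigma_S\subseteq\im\Sigma_\SC$; your version simply unwinds the same logic without naming $\chi_S$.
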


\begin{proof}
Consider the map
\[\chi_S\colon \bigoplus_{v\in S} H^1(F_v,G)\labelto{\Sigma_S}
      \im\Sigma_S \lra  \im \Sigma_S/\big(\im\Sigma_S\cap\im\Sigma_\SC\big).\]
By Lemma \ref{ss:loc-sum} the sets  \ $\im \Sigma_S= \im \Sigma_S^\ab$ \ and
\ $\im\Sigma_S\cap\im\Sigma_\SC=\im\Sigma_S^\ab\cap\im\Sigma_\SC^\ab$ \ are abelian groups.
The morphism of pointed sets $\chi_S$ is clearly surjective,
and by Theorem  \ref{t:main} its kernel is $\im\loc_S$.
We see that the following assertions are equivalent:
\begin{enumerate}
\item[(a)] the map $\loc_S$ is surjective, that is, \,$\im\loc_S=\bigoplus_{v\in S} H^1(F_v,G)$;
\item[(b)] $\ker \chi_S=\bigoplus_{v\in S} H^1(F_v,G)$;
\item[(c)] $\#(\im\chi_S) = 1$;
\item[(d)] $\im\Sigma_S\cap\im\Sigma_\SC=\im\Sigma_S$\hs;
\item[(e)] $\im\Sigma_S\subseteq\im \Sigma_\SC$\hs.
\end{enumerate}
This completes the proof.
\end{proof}

\begin{remark}
Since by Lemma \ref{ss:loc-sum} we have  $\im\Sigma_S^\ab=\im\Sigma_S$\,
and\,  $\im\Sigma_\SC^\ab=\im\Sigma_\SC$,\,
we see from (d) in the proof above and from Corollary \ref{c:main-ab}
that the localization map $\loc_S$ of \eqref{e:loc-S} is surjective
if and only if $\Ch^1_S(F,G)=\{1\}$.
\end{remark}

\begin{corollary}
Let $v_0\in\Vm(F)$, $S=\Vm(F)\smallsetminus\{v_0\}$.
Then the localization map $\loc_S$ of \eqref{e:loc-S} is surjective if and only if
\begin{equation}\label{e:lambda-lambda0}
\im\lambda_v\subseteq\im\lambda_{v_0}\quad\ \text{for all}\ \, v\in\Vm(F).
\end{equation}
\end{corollary}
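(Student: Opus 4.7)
The plan is to derive this corollary directly from Corollary \ref{c:main} by unpacking what the two subgroups $\im\Sigma_S$ and $\im\Sigma_\SC$ of $M_\Gt$ look like in the specific case where $\SC = \{v_0\}$ is a single place.

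First, I would invoke Corollary \ref{c:main}, which asserts that $\loc_S$ is surjective if and only if $\im\Sigma_S \subseteq \im\Sigma_\SC$. The right-hand side is immediate to compute: since $\SC = \{v_0\}$, the summation map $\Sigma_\SC$ is just the single map $\lambda_{v_0}$ (followed by identification), and therefore $\im\Sigma_\SC = \im\lambda_{v_0}$.

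Next, I would identify $\im\Sigma_S$ using the formula established in the proof of Lemma \ref{ss:loc-sum}, which gives
\[
\im\Sigma_S \,=\, \bigl\langle \im\lambda_v \bigr\rangle_{v \in S},
\]
the subgroup of $M_\Gt$ generated by the various $\im\lambda_v$ for $v\in S$. Since $\im\lambda_{v_0}$ is a subgroup of $M_\Gt$, the containment $\langle\im\lambda_v\rangle_{v\in S} \subseteq \im\lambda_{v_0}$ holds if and only if each generating subgroup $\im\lambda_v$ (for $v\in S$) is contained in $\im\lambda_{v_0}$. Finally, the condition $\im\lambda_{v_0}\subseteq \im\lambda_{v_0}$ is automatic, so extending the range of $v$ from $S$ to all of $\Vm(F)$ is harmless and yields exactly \eqref{e:lambda-lambda0}.

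There is no real obstacle here: the argument is a straightforward specialization of Corollary \ref{c:main} combined with the description of $\im\Sigma_S$ from Lemma \ref{ss:loc-sum}. The only point requiring a moment's care is the use that $\im\lambda_{v_0}$ is a subgroup (and not merely a subset) of $M_\Gt$, which allows us to replace the containment of the generated subgroup by the containment of each generator; this is precisely what was established in Subsection \ref{ss:real} and the discussion preceding Lemma \ref{ss:loc-sum}.
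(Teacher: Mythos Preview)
Your proposal is correct and follows essentially the same approach as the paper: the paper's proof is a one-liner stating that condition \eqref{e:lambda-lambda0} is equivalent to \eqref{e:main-formula} and concluding by Corollary \ref{c:main}, and you have simply unpacked this equivalence explicitly using Lemma \ref{ss:loc-sum}.
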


\begin{proof}
Indeed, in our case condition \eqref{e:lambda-lambda0} is equivalent to \eqref{e:main-formula},
and we conclude by Corollary \ref{c:main}.
\end{proof}

\begin{corollary}
For a subset $S\subset \Vm(F)$,
let $v_0\in\SC$, and assume that
\begin{equation}\label{e:lambda-lambda0-bis}
\im\lambda_v\subseteq \im\lambda_{v_0}\quad\ \text{for all}\ \, v\in S.
\end{equation}
Then the localization map $\loc_S$ of \eqref{e:loc-S} is surjective.
\end{corollary}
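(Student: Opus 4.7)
The plan is to reduce the corollary immediately to Corollary \ref{c:main}, which characterizes surjectivity of $\loc_S$ by the containment $\im\Sigma_S \subseteq \im\Sigma_\SC$. So the task is just to verify this containment from the hypothesis \eqref{e:lambda-lambda0-bis}.

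First, I would invoke Lemma \ref{ss:loc-sum}, which tells us that $\im\Sigma_S$ is the subgroup of $M_\Gt$ generated by the subgroups $\im\lambda_v$ for $v \in S$, and similarly $\im\Sigma_\SC$ is generated by the $\im\lambda_v$ for $v \in \SC$. Since $v_0 \in \SC$, the subgroup $\im\lambda_{v_0}$ is contained in $\im\Sigma_\SC$.

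Next, using the hypothesis \eqref{e:lambda-lambda0-bis}, for each $v \in S$ we have $\im\lambda_v \subseteq \im\lambda_{v_0} \subseteq \im\Sigma_\SC$. Because $\im\Sigma_\SC$ is a subgroup of $M_\Gt$, it contains the subgroup generated by the $\im\lambda_v$ for $v \in S$, which is $\im\Sigma_S$. Hence $\im\Sigma_S \subseteq \im\Sigma_\SC$, and Corollary \ref{c:main} gives the desired surjectivity.

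There is no real obstacle here: the corollary is a direct specialization of Corollary \ref{c:main}, with the only content being the observation that a single place $v_0 \in \SC$ with large enough $\im\lambda_{v_0}$ suffices to dominate all the contributions from $S$.
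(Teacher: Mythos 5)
Your proof is correct and follows the same route as the paper: the hypothesis \eqref{e:lambda-lambda0-bis} gives $\im\Sigma_S\subseteq\im\Sigma_\SC$ (via Lemma \ref{ss:loc-sum}, since $\im\Sigma_S$ is generated by the $\im\lambda_v$, $v\in S$, and $\im\lambda_{v_0}\subseteq\im\Sigma_\SC$), and then Corollary \ref{c:main} applies. The paper's proof is the same argument stated more tersely.
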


\begin{proof}
Indeed, \eqref{e:lambda-lambda0-bis} implies \eqref{e:main-formula},
and we conclude by Corollary \ref{c:main}.
\end{proof}

\begin{corollary}\label{c:v0-sur}
Let $v_0\in \SC$, and assume that the map
$\lambda_{v_0}\colon H^1(F_{v_0},G)\to M_\Gt$ is surjective.
Then the localization map $\loc_S$ of \eqref{e:loc-S} is surjective.
\end{corollary}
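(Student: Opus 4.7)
The plan is to reduce immediately to Corollary \ref{c:main}, which says that $\loc_S$ is surjective iff $\im\Sigma_S\subseteq\im\Sigma_\SC$. So the entire task is to verify this inclusion under the hypothesis that $\lambda_{v_0}$ is surjective for some $v_0\in\SC$.

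First I would recall from Lemma \ref{ss:loc-sum} the description
\[
\im\Sigma_\SC = \big\langle \im\lambda_v \big\rangle_{v\in \SC}\subseteq M_\Gt,
\]
so in particular $\im\Sigma_\SC \supseteq \im\lambda_{v_0}$. By hypothesis, $\lambda_{v_0}\colon H^1(F_{v_0},G)\to M_\Gt$ is surjective, hence $\im\lambda_{v_0}=M_\Gt$, and therefore $\im\Sigma_\SC = M_\Gt$.

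Since $\im\Sigma_S$ is by construction a subgroup of $M_\Gt$ (again by Lemma \ref{ss:loc-sum}), the trivial inclusion $\im\Sigma_S\subseteq M_\Gt=\im\Sigma_\SC$ holds, and Corollary \ref{c:main} gives the desired surjectivity of $\loc_S$. There is no real obstacle here; the statement is a direct specialization of the preceding corollary, obtained by taking $\im\lambda_{v_0}$ to be all of $M_\Gt$ (in which case the condition $\im\lambda_v\subseteq\im\lambda_{v_0}$ holds automatically for every $v\in S$).
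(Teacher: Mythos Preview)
Your proof is correct and follows exactly the same approach as the paper's own proof: use $\im\lambda_{v_0}=M_\Gt$ to get $\im\Sigma_S\subseteq M_\Gt=\im\lambda_{v_0}\subseteq\im\Sigma_\SC$, then apply Corollary~\ref{c:main}. The paper simply writes this chain of inclusions in one line, whereas you spell out the reference to Lemma~\ref{ss:loc-sum}.
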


\begin{proof}
Indeed, then
\[\im\Sigma_S\subseteq M_\Gt=\im\lambda_{v_0}\subseteq \im\Sigma_\SC\hs,\]
and we conclude by Corollary \ref{c:main}.
\end{proof}

\begin{proposition}[{Borel and Harder \cite[Theorem 1.7]{Borel-Harder}}]
\label{p:semisimple}
Let $G$ be a {\emm semisimple} group over a number field $F$,
and let $S\subset \Vm(F)$ be a subset
such that the complement $\SC$ of $S$ contains a {\emm finite} place $v_0\in\Vm_f(F)$.
Then the localization map $\loc_S$ of \eqref{e:loc-S} is surjective.
\end{proposition}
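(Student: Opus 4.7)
The plan is to reduce directly to Corollary \ref{c:v0-sur}, that is, to verify that the map $\lambda_{v_0}\colon H^1(F_{v_0},G)\to M_\Gt$ is surjective for the given finite place $v_0\in\SC$. Once this is established, the proposition follows immediately.

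The key input is that semisimplicity makes $M=\pi_1(G)$ finite. Indeed, since $[G,G]=G$, the homomorphism $\rho\colon G^\ssc\to G$ is an isogeny, so for a maximal torus $T\subseteq G$ the induced map $\rho_*\colon\X_*(T^\ssc)\to\X_*(T)$ is a homomorphism of free abelian groups of equal rank. Hence $M=\X_*(T)/\rho_*\X_*(T^\ssc)$ is finite. Consequently, for any subgroup $H\subseteq\G$, the coinvariants $M_H$ are finite torsion groups, and in particular $M_\Gwt=M_\Gw$ and $M_\Gt=M_\G$.

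Next I would unwind the definition of $\lambda_{v_0}$. Since $v_0$ is a finite place, Theorem \ref{t:4.1-Memoir} provides a bijection $\alpha_{v_0}\colon H^1(F_{v_0},G)\isoto M_\Gwt$ (namely, the composite of $\ab_{v_0}$ with the isomorphism $\alpha_{v_0}^\ab$). Therefore $\im\lambda_{v_0}=\im\omega_{v_0}$, where $\omega_{v_0}\colon M_\Gw\to M_\G$ is the canonical map induced by the inclusion $\Gw\hookrightarrow\G$. By comparing presentations, this map is simply the quotient of $M_\Gw$ by the additional relations $\upgamm m-m$ for $\gamma\in\G\smallsetminus\Gw$; in particular $\omega_{v_0}$ is surjective.

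Putting these observations together, $\lambda_{v_0}=\omega_{v_0}\circ\alpha_{v_0}$ is a surjection onto $M_\Gt$, and Corollary \ref{c:v0-sur} yields that $\loc_S$ is surjective. There is no real obstacle here — the entire argument rests on the single observation that $M$ is finite in the semisimple case, which trivializes the torsion subscripts and reduces $\omega_{v_0}$ to a transparent quotient map.
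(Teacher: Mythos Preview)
Your proof is correct and follows essentially the same route as the paper: both arguments observe that semisimplicity forces $M$ to be finite, whence $M_\Gwt=M_\Gw$ and $M_\Gt=M_\G$, and then use that the canonical map $M_\Gw\to M_\G$ is surjective together with the fact that $\im\lambda_{v_0}=\im\omega_{v_0}$ for a finite place, concluding via Corollary~\ref{c:v0-sur}. Your write-up even supplies a touch more detail on why $M$ is finite and why $\omega_{v_0}$ is onto.
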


\begin{proof}
Since $G$ is semisimple, the $\G$-module $M$ is finite,
and so are the groups $M_\G$ and $M_\Gw$
where $w$ is a place of $E$ over $v_0$\hs.
It follows that
\[M_\Gwt=M_\Gw\quad\ \text{and}\quad\ M_\Gt=M_\G\hs.\]
The natural homomorphism $M_\Gw\to M_\G$ is clearly surjective.
Therefore, the  homomorphism
\begin{equation*}
\omega_{v_0}\colon\, M_\Gwt=M_\Gw\,\lra\, M_\G= M_\Gt
\end{equation*}
is surjective.
Since $v_0$ is finite, we have $\im\lambda_{v_0}=\im\omega_{v_0}$,
whence the map $\lambda_{v_0}$ is surjective.
We conclude by Corollary \ref{c:v0-sur}.
\end{proof}

\section{Exact sequence}
\label{s:exact}
In this section we construct an exact sequence that we shall use in Section \ref{s:PR}.

\begin{theorem}\label{t:exact}
A finite group $\G$ and a short exact sequence of $\G$-modules
\begin{equation}\label{e:1-2-3}
0\to B_1\labelto i B_2\labelto j B_3\to 0
\end{equation}
give rises to an exact sequence
\begin{multline}\label{e:1-2-3-long}
(B_1)_\Gt \labelto{i_*} (B_2)_\Gt \labelto{j_*} (B_3)_\Gt\labelto\delta\\
\Q/\Z\otimes_\Z (B_1)_\G \labelto{i_*} \Q/\Z\otimes_\Z(B_2)_\G
     \labelto{j_*} \Q/\Z\otimes_\Z(B_3)_\G\to 0
\end{multline}
depending functorially on $\G$ and on the sequence \eqref{e:1-2-3}.
\end{theorem}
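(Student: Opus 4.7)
The plan is to reduce \eqref{e:1-2-3-long} to the snake lemma applied to the rationalization map $c \mapsto 1 \otimes c$. First I would apply $(-)_\G$ to \eqref{e:1-2-3} to obtain the right-exact sequence $(B_1)_\G \to (B_2)_\G \to (B_3)_\G \to 0$. Let $K \subseteq (B_1)_\G$ denote the kernel of $i_*$; this is the image of the connecting map $H_1(\G, B_3) \to H_0(\G, B_1) = (B_1)_\G$ in the group-homology long exact sequence of \eqref{e:1-2-3}. Since $H_i(\G, -)$ is annihilated by $|\G|$ for $i \geq 1$, the group $K$ is torsion, and in particular $K \subseteq (B_1)_\Gt$. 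Thus $0 \to (B_1)_\G / K \to (B_2)_\G \to (B_3)_\G \to 0$ is genuinely short exact.

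Next I would apply the snake lemma to the commutative diagram whose top row is this short exact sequence, whose bottom row is obtained by tensoring over $\Z$ with $\Q$ (still exact, since $\Q$ is $\Z$-flat), and whose vertical arrows are the maps $c \mapsto 1 \otimes c$. For any abelian group $A$, tensoring $0 \to \Z \to \Q \to \Q/\Z \to 0$ with $A$ gives
\[ 0 \to A_\Tors \to A \to A \otimes \Q \to \Q/\Z \otimes A \to 0, \]
so the kernel and cokernel of each vertical arrow are identified with $(-)_\Tors$ and $\Q/\Z \otimes (-)$ respectively. The snake lemma then delivers an exact sequence
\[ 0 \to ((B_1)_\G / K)_\Tors \to (B_2)_\Gt \to (B_3)_\Gt \to \Q/\Z \otimes ((B_1)_\G / K) \to \Q/\Z \otimes (B_2)_\G \to \Q/\Z \otimes (B_3)_\G \to 0. \]

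To match \eqref{e:1-2-3-long} I would then make two identifications. First, since $K$ is torsion and $\Q/\Z$ is divisible, $K \otimes \Q/\Z = 0$; right-exactness of $\Q/\Z \otimes (-)$ applied to $K \to (B_1)_\G \to (B_1)_\G / K \to 0$ then yields $\Q/\Z \otimes (B_1)_\G \isoto \Q/\Z \otimes ((B_1)_\G / K)$. Second, a short diagram-chase using that $K \subseteq (B_1)_\Gt$ is torsion gives $((B_1)_\G / K)_\Tors = (B_1)_\Gt / K$, and precomposing the leftmost arrow of the snake sequence with the surjection $(B_1)_\Gt \onto (B_1)_\Gt / K$ identifies it with $i_* \colon (B_1)_\Gt \to (B_2)_\Gt$. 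Substituting these identifications produces \eqref{e:1-2-3-long}; the initial $0 \to$ drops off, but this is consistent with the statement, which does not assert injectivity of $i_*$ on torsion. Functoriality in $\G$ and in \eqref{e:1-2-3} is automatic, since every step --- coinvariants, tensoring with $\Q$, the snake lemma, and both identifications --- is functorial. There is no real obstacle; the only noteworthy point is the right-hand identification, which relies specifically on divisibility of $\Q/\Z$ to kill the torsion group $K$.
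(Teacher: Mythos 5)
Your argument is correct, but it follows a genuinely different route from the paper's. The paper (following Hinich) observes that $B\mapsto \Q/\Z\otimes_\Z B_\G$ is the functor $\Q/\Z\otimes_{\Lam}(-)$ with $\Lam=\Z[\G]$, takes the long exact sequence of $\Tor^{\Lam}(\Q/\Z,-)$ associated with \eqref{e:1-2-3}, and identifies $\Tor_1^{\Lam}(\Q/\Z,B)\cong B_\Gt$ by tensoring $0\to\Z\to\Q\to\Q/\Z\to 0$ with $B$ over $\Lam$ and showing $\Tor_1^{\Lam}(\Q,B)=0$ (because $\Tor_1^{\Lam}(\Q,B)\cong\Q\otimes_\Z H_1(\G,B)$ and $H_1(\G,B)$ is killed by $\#\G$). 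You instead stay entirely within ordinary abelian groups: you pass to coinvariants, use the group-homology long exact sequence only to see that the kernel $K$ of $i_*\colon (B_1)_\G\to(B_2)_\G$ is torsion, and then run the snake lemma on the rationalization map $A\to A\otimes\Q$, whose kernel and cokernel are $A_{\Tors}$ and $\Q/\Z\otimes A$; the two identifications you make ($\Q/\Z\otimes(B_1)_\G\cong\Q/\Z\otimes\big((B_1)_\G/K\big)$ since $K$ is torsion and $\Q/\Z$ is divisible, and $\big((B_1)_\G/K\big)_{\Tors}=(B_1)_\Gt/K$) are both correct, and precomposing with the surjection $(B_1)_\Gt\onto(B_1)_\Gt/K$ indeed recovers $i_*$ without affecting exactness at $(B_2)_\Gt$. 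Note that both proofs ultimately rest on the same fact, the annihilation of $H_1(\G,-)$ by $\#\G$; the paper's derived-functor packaging buys a cleaner statement of functoriality and exhibits the sequence as a standard $\Tor$-sequence (so the boundary map is canonical by construction), whereas your version is more elementary, at the modest cost of the bookkeeping around $K$. One optional addition: unwinding your snake-lemma boundary map on elements recovers exactly the explicit formula for $\delta$ given in Subsection \ref{ss-delta}, which the theorem statement does not require but which ties your construction to the paper's description.
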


\begin{subsecc}\label{ss-delta}
We specify the homomorphism $\delta$.
Let $x_3\in B_3$ be such that the image $(x_3)_\G$ of $x_3$
in $(B_3)_\G$ is contained in  $(B_3)_\Gt$\hs.
This means that there exist $n\in\Z_{>0}$ and $y_{3,\g}\in B_3$
such that
$$nx_3=\sum_{\g\in\G}\big(\upgam y_{3,\g}-y_{3,\g}\big).$$
We lift $x_3$ to some $x_2\in B_2$\hs, we lift each $y_{3,\g}$ to some $y_{2,\g}\in B_2$\hs,
and we consider the element
\begin{equation*}
z_2=nx_2-\sum_{\g\in\G}\big(\upgam y_{2,\g}-y_{2,\g}\big).
\end{equation*}
Then $j(z_2)=0\in B_3$\hs, whence $z_2=i(z_1)$ for some  $z_1\in B_1$\hs.
We consider the image $(z_1)_\Gtf$ of $z_1\in B_1$ in $(B_1)_\Gtf$\hs, and we put
\[\delta\big(\hs(x_3)_\G\big)=  \frac1n\otimes (z_1)_\Gtf \hs
    \in \hs\Q/\Z\otimes_\Z (B_1)_\Gtf=\Q/\Z\otimes_\Z (B_1)_\G\]
where we write $\frac1n$ for the image in $\Q/\Z$ of $\frac1n\in \Q$.

Below we give the proof of Theorem \ref{t:exact} suggested by Vladimir Hinich (private communication).
For another proof, due to Alexander Petrov, see \cite{SashaP}.
\end{subsecc}

\begin{subsecc}{\it Proof Theorem \ref{t:exact}  due to Vladimir Hinich.}\ 
The functor  from  the category $\G$-modules to the category of abelian groups
\[B\rsa \Q/\Z\otimes_\Z B_\G\]
is the same as
\[ B\rsa\Q/\Z\otimes_\Lam\! B\]
where $\Lam=\Z[\G]$ is the group ring of $\G$.
From the short exact sequence of $\G$-modules  \eqref{e:1-2-3},
we obtain a long exact sequence
\begin{multline*}
\dots\to \Tor_1^\Lam(\Q/\Z,B_1) \labelto{i_*} \Tor_1^\Lam(\Q/\Z,B_2) \labelto{j_*} \Tor_1^\Lam(\Q/\Z,B_3)\labelto\delta\\
\Q/\Z\otimes_\Lam\! B_1\labelto{i_*} \Q/\Z\otimes_\Lam\! B_2\labelto{j_*} \Q/\Z\otimes_\Lam\! B_3\to 0
\end{multline*}
depending functorially on $\G$ and on  \eqref{e:1-2-3}; see Weibel \cite{Weibel}.
Now Theorem \ref{t:exact} follows from the next proposition.
\qed
\end{subsecc}

\begin{proposition}\label{p:Q/Z-B-BGt}
For a finite group $\G$ and a $\G$-module $B$, there is a canonical and functorial isomorphism
\[\Tor_1^\Lam (\Q/\Z,B)\isoto B_\Gt\hs\]
where $\Lam=\Z[\G]$.
\end{proposition}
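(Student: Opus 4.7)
The plan is to derive the isomorphism from the long exact $\Tor^\Lam_\b$-sequence attached to the short exact sequence of trivial $\G$-modules
\begin{equation*}
0\to\Z\to\Q\to\Q/\Z\to 0.
\end{equation*}
Since $\G$ acts trivially on $\Z$, $\Q$, and $\Q/\Z$, for any $\Lam$-module $N$ one has the identifications
\begin{equation*}
\Z\otimes_\Lam N=N_\G,\qquad \Q\otimes_\Lam N=\Q\otimes_\Z N_\G,\qquad (\Q/\Z)\otimes_\Lam N=\Q/\Z\otimes_\Z N_\G,
\end{equation*}
which pin down all degree-$0$ terms of the resulting long exact sequence.

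The crux will be the vanishing $\Tor_1^\Lam(\Q,B)=0$. To prove it, I would choose a projective resolution $P_\b\to B$ over $\Lam$ and use flatness of $\Q$ over $\Z$ to compute
\begin{equation*}
\Tor_i^\Lam(\Q,B)=H_i\big(\Q\otimes_\Lam P_\b\big)=H_i\big(\Q\otimes_\Z(P_\b)_\G\big)=\Q\otimes_\Z H_i\big((P_\b)_\G\big)=\Q\otimes_\Z H_i(\G,B).
\end{equation*}
For $i\geq 1$ the group homology $H_i(\G,B)=\Tor_i^\Lam(\Z,B)$ is annihilated by $|\G|$, hence is torsion, and is therefore killed by $\Q\otimes_\Z-$.

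With this vanishing in hand, the relevant portion of the long exact $\Tor$-sequence reads
\begin{equation*}
0\to\Tor_1^\Lam(\Q/\Z,B)\to B_\G\lra \Q\otimes_\Z B_\G,
\end{equation*}
where the second arrow is $x\mapsto 1\otimes x$. Thus $\Tor_1^\Lam(\Q/\Z,B)$ is identified with the kernel of this canonical map, which is by definition the torsion subgroup $(B_\G)_\Tors=B_\Gt$. Functoriality in $B$ and in $\G$ is inherited from that of the long exact $\Tor$-sequence. The only genuine obstacle is the vanishing $\Tor_1^\Lam(\Q,B)=0$, which rests on the two classical ingredients used above: the $\Z$-flatness of $\Q$ and the fact that the group homology of a finite group in positive degrees is killed by the order of the group.
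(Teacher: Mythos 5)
Your proof is correct and takes essentially the same route as the paper: the long exact $\Tor^\Lam$-sequence attached to $0\to\Z\to\Q\to\Q/\Z\to 0$, combined with the vanishing $\Tor_1^\Lam(\Q,B)=0$, which you establish exactly as the paper does, via $\Z$-flatness of $\Q$ and the fact that $H_i(\G,B)$ is killed by $\#\G$ for $i\geq 1$. The only cosmetic difference is that you resolve $B$ over $\Lam$ whereas the paper resolves the trivial module $\Z$; this is just the balanced form of the same $\Tor$ computation.
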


\begin{proof}
Consider the short exact sequence
\[0\to \Z\to \Q\to\Q/\Z\to 0\]
regarded as a short exact sequence of $\G$-modules with trivial action of $\G$.
Tensoring with $B$, we obtain a long exact sequence
\begin{equation} \label{e:long-tensor}
\dots\to\Tor_1^\Lam(\Q,B)\to \Tor_1^\Lam(\Q/\Z,B)\to \Z\otimes_\Lam\! B
      \to \Q\otimes_\Lam\! B \to \Q/\Z\otimes_\Lam\! B\to 0.
\end{equation}
We have canonical isomorphisms
\[\Z\otimes_\Lam\! B=B_\G\quad\ \text{and}\quad\
   \ker\big[\Z\otimes_\Lam\! B\to \Q\otimes_\Lam\! B\big]= B_\Gt\hs.\]
By Lemma \ref{l:Tor-Q} below, we have  $\Tor_1^\Lam(\Q,B)=0$,
and the proposition follows from   \eqref{e:long-tensor}.
\end{proof}

\begin{lemma}\label{l:Tor-Q}
For a finite group $\G$ and any $\G$-module $B$, we have
\[\Tor_1^\Lam (\Q,B)=0\]
where $\Lam=\Z[\G]$.
\end{lemma}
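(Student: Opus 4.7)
My plan is to reduce the vanishing to two standard facts: flatness of $\Q$ over $\Z$, and that the homology of a finite group in positive degree is annihilated by the group order.

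First I would pick a projective resolution $P_\b\to B$ over $\Lam=\Z[\G]$. For any $\Lam$-module $P$, associativity of tensor products gives a canonical identification
\[\Q\otimes_\Lam P\;=\;\Q\otimes_\Z\Z\otimes_\Lam P\;=\;\Q\otimes_\Z P_\G\hs.\]
Since $\Q$ is a flat $\Z$-module, the functor $\Q\otimes_\Z(-)$ is exact and commutes with taking homology of any complex of abelian groups. Applying this to the complex $(P_\b)_\G$ I obtain
\[\Tor_1^\Lam(\Q,B)\;=\;H_1(\Q\otimes_\Lam P_\b)\;\cong\;\Q\otimes_\Z H_1((P_\b)_\G)\;=\;\Q\otimes_\Z H_1(\G,B).\]

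Next I would invoke the classical fact that for a finite group $\G$ and every $i\ge 1$ the homology $H_i(\G,B)$ is annihilated by $|\G|$; this follows either from the norm element argument or from the observation that for the trivial subgroup the composition of restriction and corestriction is multiplication by $|\G|$ and simultaneously factors through zero on $H_i$ for $i\ge 1$. Since $H_1(\G,B)$ is then $|\G|$-torsion, tensoring over $\Z$ with the torsion-free divisible group $\Q$ annihilates it, and we conclude $\Tor_1^\Lam(\Q,B)=0$.

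The only step that requires a careful line of justification is the identification $\Q\otimes_\Lam P\cong\Q\otimes_\Z P_\G$, which will be the main bookkeeping point but is routine once written out. An alternative route I would keep in mind is to observe that $\Q\otimes_\Z\Lam=\Q[\G]$ is semisimple by Maschke's theorem, so $\Q$ is projective over $\Q[\G]$, and flat base change along $\Z\to\Q$ would then kill $\Tor^\Lam_i(\Q,-)$ for $i\ge 1$; but the direct flatness argument above is shorter and self-contained, so I would present that one.
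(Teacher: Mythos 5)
Your proof is correct and is essentially the paper's argument: both reduce to the isomorphism $\Tor_1^\Lam(\Q,B)\cong\Q\otimes_\Z H_1(\G,B)$ via flatness of $\Q$ over $\Z$ and then kill this group because $H_1(\G,B)$ is annihilated by $\#\G$. The only (immaterial) difference is bookkeeping: you resolve $B$ in the second variable and invoke balancedness of $\Tor$, whereas the paper tensors a $\Lam$-free resolution of the trivial module $\Z$ with $\Q$ to get a flat resolution of $\Q$ in the first variable.
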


\begin{proof}
Let
\[P_\bullet:\quad\dots\to P_2\to P_1\to P_0\to\Z\to 0\]
be a $\Lam$-free resolution of the trivial $\G$-module $\Z$,
for example, the standard complex; see Atiyah and Wall \cite[Section 2]{AW}.
Tensoring with $\Q$ over $\Z$, we obtain a flat resolution of $\Q$
\[\dots\to \Q\otimes_\Z P_2\to \Q\otimes_\Z P_1\to \Q\otimes_\Z P_0\to\Q\to 0.\]
Tensoring with $B$ over $\Lam=\Z[\G]$, we obtain the complex $(\Q\otimes_\Z P_\bullet)\otimes_\Lam B$\hs:
\begin{equation}\label{e:QPB}
\quad\dots\to (\Q\otimes_\Z P_2)\otimes_\Lam B\to (\Q\otimes_\Z P_1)\otimes_\Lam B
    \to (\Q\otimes_\Z P_0)\otimes_\Lam B\to\Q\otimes_\Lam B\to 0.
\end{equation}
By definition, $\Tor_1^\Lam(\Q,B)$ is the first homology group of this complex.

However, we can obtain the complex \eqref{e:QPB} from $P_\bullet$
by tensoring first with $B$ over $\Lam$,
and after that with $\Q$ over $\Z$:
\[\Q\otimes_\Z\big(P_\bullet\otimes _\Lam B\big)\hs\cong\hs (\Q\otimes_\Z P_\bullet)\otimes_\Lam B.\]
Since $\Q$ is a flat $\Z$-module, we obtain canonical isomorphisms
\[\Tor_1^\Lam(\Q,B)\cong \Q\otimes_\Z \Tor_1^\Lam(\Z,B)=\Q\otimes_\Z H_1(\G,B).\]

Now, since the group $\G$ is finite, the abelian group $H_1(\G,B)$
is killed by multiplication by $\#\G$;
see, for instance, Atiyah and Wall \cite[Section 6, Corollary 1 of Proposition 8]{AW}.
It follows that $\Q\otimes_\Z H_1(\G,B)=0$.
Thus $\Tor_1^\Lam (\Q,B)=0$, which completes the proofs
of Lemma \ref{l:Tor-Q}, Proposition \ref{p:Q/Z-B-BGt}, and Theorem \ref{t:exact}.
\end{proof}

Alternatively, one can check directly that the map $\delta$ constructed in Subsection \ref{ss-delta}
is well-defined (does not depend on the choices made) and that the sequence \eqref{e:1-2-3-long} is exact.

\section{Surjectivity for a reductive group with nice radical}
\label{s:PR}

In this section we prove the following theorem that gives a sufficient condition
for the surjectivity of the localization map
\eqref{e:loc-S} for a reductive $F$-group $G$
in terms of the  {radical (largest central torus) of $G$.

\begin{theorem}\label{t:PR-generalized}
Let $G$ be a reductive group over a number field $F$,
and let $C$ denote the radical of $G$.
Write $\ov G=G/C$, which is a semisimple group,
and consider the short exact sequence of fundamental groups \cite[Lemma 1.5]{Borovoi-Memoir}
\[ 0\to M_C\to M\to \ov M\to 0\]
 where
 $$M_C=\pi_1 (C)=\X_*(C),\quad  M=\pi_1 (G),\quad  \ov M=\pi_1 (\ov G).$$
We define $\Gamma=\Gal(E/F)$ for $M$ as in Subsection \ref{ss:local-results}.
Let $S\subset \Vm(F)$ be a subset, and assume
that $\SC$ contains a {\emm finite} place $v_0$ such that
\begin{equation}\label{e:Z-Gw-G}
\im\hs [\Gw\to \Aut M_C] = \im\hs [\G\to \Aut M_C]
\end{equation}
where  $w$ is a place of $E$ over $v_0$\hs.
Then the localization map  $\loc_S$ of \eqref{e:loc-S} is surjective.
\end{theorem}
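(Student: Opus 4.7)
The plan is to reduce, via Corollary \ref{c:v0-sur}, the surjectivity of $\loc_S$ to the surjectivity of the single map $\lambda_{v_0}\colon H^1(F_{v_0},G)\to M_\Gt$. Since $v_0$ is finite, Theorem \ref{t:4.1-Memoir} provides a bijection $\alpha_{v_0}\colon H^1(F_{v_0},G)\isoto M_\Gwt$, and by construction $\lambda_{v_0}=\omega_{v_0}\circ\alpha_{v_0}$. Hence the whole problem collapses to proving that the map $\omega_{v_0}\colon M_\Gwt\to M_\Gt$ induced by the inclusion $\Gw\hookrightarrow \G$ is surjective.

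For this I would apply Theorem \ref{t:exact} to the short exact sequence $0\to M_C\to M\to \ov M\to 0$ both for the group $\G$ and for its subgroup $\Gw$, and compare the two resulting six-term exact sequences via the commutative ladder given by the naturality of Theorem \ref{t:exact}. Two features of this ladder will drive the argument. First, since $\ov G=G/C$ is semisimple, the abelian group $\ov M$ is finite, so $\ov M_\Gw=\ov M_\Gwt$ and $\ov M_\G=\ov M_\Gt$, and the natural quotient map $\ov M_\Gw\twoheadrightarrow\ov M_\G$, which simply imposes the additional relations indexed by $\G\smallsetminus\Gw$, is surjective on torsion parts as well. Second, hypothesis \eqref{e:Z-Gw-G} forces the two subgroups of relations defining $(M_C)_\Gw$ and $(M_C)_\G$ to coincide, since those subgroups depend only on the image of the acting group inside $\Aut M_C$. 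Consequently the vertical maps $(M_C)_\Gwt\to(M_C)_\Gt$ and $\Q/\Z\otimes(M_C)_\Gw\to\Q/\Z\otimes(M_C)_\G$ in the ladder are both isomorphisms.

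With these two inputs, surjectivity of $\omega_{v_0}$ follows from a four-lemma-type diagram chase: starting with $m\in M_\Gt$, project to $\ov m\in\ov M_\Gt$ and lift to $\ov n\in\ov M_\Gwt$ using the surjection above; check that $\delta(\ov n)\in\Q/\Z\otimes (M_C)_\Gw$ maps isomorphically down to $\delta(\ov m)=0$, so $\delta(\ov n)=0$ and $\ov n$ lifts to some $n\in M_\Gwt$; then absorb the discrepancy $m-\omega_{v_0}(n)$, which lies in $\ker\bigl[M_\Gt\to\ov M_\Gt\bigr]$ and hence in the image of some $c\in(M_C)_\Gt$, by using the isomorphism $(M_C)_\Gt\cong(M_C)_\Gwt$ to lift $c$ back and correct $n$. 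The main obstacle I expect is not the chase itself, which is mechanical, but rather two bookkeeping items: invoking cleanly the functoriality of Theorem \ref{t:exact} under restriction $\Gw\hookrightarrow \G$, and verifying that every element encountered along the ladder genuinely lies in the torsion subgroup of the group in which it sits. Both points should follow straightforwardly from the explicit construction of $\delta$ recalled in Subsection \ref{ss-delta} once torsion is tracked carefully, but they deserve a brief justification.
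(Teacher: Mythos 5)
Your proposal is correct and follows essentially the same route as the paper: reduction via Corollary \ref{c:v0-sur} to the surjectivity of $\omega_{v_0}\colon M_\Gwt\to M_\Gt$, the two six-term sequences of Theorem \ref{t:exact} for $\Gw$ and $\G$ compared in a ladder, the identifications $(M_C)_\Gw=(M_C)_\G$ from hypothesis \eqref{e:Z-Gw-G} and the surjectivity of $\ov\omega$ from finiteness of $\ov M$, and then the four lemma (which you merely spell out as an explicit chase). No substantive difference from the paper's argument.
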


\begin{proof}
It follows from \eqref{e:Z-Gw-G} that $(M_C)_\Gw=(M_C)_\G$\hs,
whence
$$(M_C)_\Gwt=(M_C)_\Gt\quad\ \text{and}\quad\ \Q/\Z\otimes_\Z (M_C)_\Gw=\Q/\Z\otimes_\Z(M_C)_\G\hs.$$
Using Theorem \ref{t:exact}, we construct an exact commutative diagram
\[
\xymatrix{
(M_C)_\Gwt \ar[r]\ar@{=}[d] &M_\Gwt\ar[r]\ar[d]^-\omega  &\ov M_\Gwt\ar[r]\ar[d]^-{\ov\omega} &\Q/\Z\otimes_\Z\!(M_C)_\Gw \ar@{=}[d] \\
(M_C)_\Gt \ar[r]            &M_\Gt\ar[r]                 &\ov M_\Gt\ar[r]              &\Q/\Z\otimes_\Z\!(M_C)_\G
}
\]
Since $\ov G$ is semisimple, its algebraic fundamental group $\ov M$ is finite, and therefore
the homomorphism $\ov\omega$ in the diagram above is surjective;
see the proof of Proposition \ref{p:semisimple}.
By a four lemma, the homomorphism
\[\omega=\omega_{v_0}\colon\hs M_\Gwt\to M_\Gt\]
is surjective as well.
Since $v_0$ is finite, the map
\[\alpha_{v_0}\colon\, H^1(F_{v_0},G)\to H^1_\ab(F_{v_0},G)\to  M_\Gwt\]
is bijective, and therefore the map
\[\lambda_{v_0}\colon\, H^1(F_{v_0},G)\to H^1_\ab(F_{v_0},G)\to M_\Gwt \labelt\omega M_\Gt\]
is surjective.
We conclude by Corollary \ref{c:v0-sur}.
\end{proof}

\begin{corollary}[{Prasad and Rapinchuk \cite[Proposition 2(a)]{Prasad-Rapinchuk}}\hs]
Let $G$ be a reductive group over a number field $F$,
and let $C$ denote the radical of $G$.
Assume that the $F$-torus $C$ is split and that  $\SC$ contains
a  finite place $v_0$.
Then the localization map $\loc_S$ of \eqref{e:loc-S} is surjective.
\end{corollary}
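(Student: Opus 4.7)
The plan is to deduce this corollary immediately from Theorem \ref{t:PR-generalized}. With the notation of that theorem, $M_C = \X_*(C)$ is the cocharacter lattice of the radical $C$, equipped with its natural action of the absolute Galois group $\Gal(\Fbar/F)$, which factors through an action of $\G = \Gal(E/F)$ on $M_C$ (after possibly enlarging the splitting field $E$ from Subsection \ref{ss:local-results}).

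The key observation is that, since the $F$-torus $C$ is split, the full group $\Gal(\Fbar/F)$ acts trivially on $\X_*(C) = M_C$, and therefore so does $\G$. Hence the homomorphism $\G \to \Aut M_C$ has trivial image, and a fortiori the restriction $\Gw \to \Aut M_C$ has trivial image for any place $w$ of $E$ above $v_0$. Consequently, both sides of condition \eqref{e:Z-Gw-G} equal the trivial subgroup of $\Aut M_C$, so the hypothesis of Theorem \ref{t:PR-generalized} is satisfied for the given finite place $v_0 \in \SC$.

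Applying Theorem \ref{t:PR-generalized} then yields the surjectivity of $\loc_S$. There is no real obstacle here: the corollary is the easiest special case of Theorem \ref{t:PR-generalized}, corresponding to the situation where the splitting assumption on $C$ forces both subgroups appearing in \eqref{e:Z-Gw-G} to collapse to $\{1\}$ simultaneously, regardless of which finite place $v_0$ is chosen in $\SC$.
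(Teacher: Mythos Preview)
Your proof is correct and follows exactly the same approach as the paper: since $C$ is split, $\G$ acts trivially on $M_C$, so both sides of \eqref{e:Z-Gw-G} equal $\{1\}$, and Theorem \ref{t:PR-generalized} applies. The only superfluous remark is the parenthetical about possibly enlarging $E$; since $M_C$ embeds in $M$ via the exact sequence $0\to M_C\to M\to \ov M\to 0$, the field $E$ chosen for $M$ in Subsection \ref{ss:local-results} already splits $M_C$.
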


\begin{proof}
We define $E$, $\G$, and $\Gw$ for $M=\pi_1 (G)$ as in Subsection \ref{ss:local-results}.
Then $\im\hs [\G\to \Aut M_C]=\{1\}$, and hence \eqref{e:Z-Gw-G} holds.
We conclude by Theorem \ref{t:PR-generalized}.
\end{proof}

\begin{proof}[Proof of Corollary \ref{c:Prasad-Rapinchuk}]
We define $E$, $\G$, and $\Gw$ for $M=\pi_1 (G)$
as in Subsection \ref{ss:local-results}. We have
\[ \im \big[\Gw\to \Aut M_C\big]\hs\subseteq\hs\im\big[\G\to \Aut M_C\big],
     \quad\ \#\im\big[\Gw\to \Aut M_C\big]\ \big |\ p,
     \quad\   \im\big[\Gw\to \Aut M_C\big]\neq \{1\}. \]
It follows that  \eqref{e:Z-Gw-G} holds.
We conclude by Theorem \ref{t:PR-generalized}.
\end{proof}

\appendix

\newcommand{\GscR}{{\hs G^\ssc\!\labelt\rho G,\ }}
\newcommand{\GscF}{{\hs G^\ssc(\Fs)\labelt\rho G(\Fs),\ }}

\section{Abelianization}
\label{s:ab}

\begin{subsecc}\label{ss:G-Gss-Gsc}
Let $G$ be a reductive group over a field $F$ {\em of arbitrary characteristic}.
We consider the  homomorphism
$\rho\colon  G^\ssc\to G$ of Subsection \ref{ss:pi1}.

The group $G$ acts by conjugation on itself on the left, and by functoriality $G$ acts on $G^\ssc$.
We obtain an action
\[\theta\colon G\times G^\ssc\to G^\ssc,\quad\  (g,s)\mapsto {}^g\! s.\]
On $\Fbar$-points, if $s\in G^\ssc(\Fbar)$, $g_1\in G(\Fbar)$,
$g_1=\rho(s_1)\cdot  z_1$ with $s_1\in G^\ssc(\Fbar)$, $z_1\in Z_G(\Fbar)$,
then
\[\theta(g_1,s)={}^{g_1}\hm s= s_1 s s_1^{-1}.\]
Since the groups $G$ and $G^\ssc$ are smooth, this formula uniquely determines $\theta$.
The action $\theta$ has the following properties:
\begin{align*}
^{\rho(s)}\hm s'&=s\hs s' s^{-1},\\
\rho({}^{g_1}\hm s')&=g_1\hs \rho(s') g_1^{-1}
\end{align*}
for $g_1\in G(\Fbar),\ s,s'\in G^\ssc(\Fbar)$.
In other words, $(G^\ssc,G,\rho,\theta)$ is a (left) {\em crossed module of algebraic groups};
see for instance  \cite[Definition 3.2.1]{Borovoi-Memoir}.
We write it as $\big(\GscR \theta\big)$, and we regard it as a complex in degrees $-1,\, 0.$
On $\Fs$-points we obtain a  $\Gal(\Fs/F)$-equivariant crossed module  $\big(\GscF \theta\big)$
where $\Fs$ is the separable closure of $F$ in $\Fbar$.
\end{subsecc}

\begin{subsecc}
Deligne \cite[Section 2.0.2]{Deligne}
noticed that the commutator map
\[[\,\hs,\hs]\colon G\times G\to G,\quad\ g_1,g_2\mapsto [g_1,g_2]\coloneqq g_1 g_2 g_1^{-1} g_2^{-1}\]
lifts to a certain map (morphism of $F$-varieties)
\[\{\,,\}\colon G\times G\to G^\ssc,\quad\ g_1,g_2\mapsto \{g_1,g_2\}\]
as follows.
The commutator map
\[G^\ssc\times G^\ssc\to G^\ssc,\quad\ s_1,s_2\mapsto [s_1,s_2]\coloneqq s_1 s_2 s_1^{-1} s_2^{-1}\]
clearly factors via  a morphism of $F$-varieties
\[(G^\ssc)^\ad\times (G^\ssc)^\ad\to G^\ssc\]
where $(G^\ssc)^\ad=G^\ssc/Z_{G^\ssc}$ and $Z_{G^\ssc}$ denotes the center of $G^\ssc$.
Identifying $(G^\ssc)^\ad$ with $G^\ad\coloneqq G/Z_G$\hs, we obtain the desired morphism of $F$-varieties
\[\{\,,\}\colon G\times G\to G^\ad\times G^\ad\to G^\ssc.\]
On $\Fbar$-points, if $g_1,g_2\in G(\Fbar),\ g_1=\rho(s_1) z_1\hs,\ g_2=\rho(s_2) z_2$
where $s_1,s_2\in G^\ssc(\Fbar),\ z_1,z_2\in Z_G(\Fbar)$, then
\[ \{g_1,g_2\}=[s_1,s_2]=s_1s_2s_1^{-1}s_2^{-1}.\]
Since $G$ and $G^\ssc$ are smooth, this formula uniquely determines $\{\,,\}$.
The constructed map $\{\,,\}$ satisfies the following equalities of  Conduch\'e \cite[(3.11)]{Conduche}):
\begin{align*}
&\rho\big(\{g_1,g_2\}\big)=[g_1,g_2];\\
&\big\{\rho(s_1),\rho(s_2)\big\}=[s_1,s_2];\\
&\{g_1,g_2\}=\{g_2,g_1\}^{-1};\\
&\{g_1g_2,\hs g_3\}=\{g_1 g_2 g_1^{-1},\,g_1 g_3 g_1^{-1}\}\hs\{g_1,g_3\}.
\end{align*}
In other words, the map $\{\,,\}$ is a {\em symmetric braiding} of the crossed module $(G^\ssc,G, \rho,\theta)$.
We denote by $G_\ab$ the corresponding {\em stable} (=symmetrically braided) crossed module:
\[G_\ab=\big(\GscR \theta,\{,\hmm\}\hs\big).\]

Let $\varphi\colon G\to H$ be a homomorphism of reductive $F$-groups.
It induces a homomorphism $\varphi^\ssc\colon G^\ssc\to H^\ssc$.
It is easy to see that
\[ {}^{\varphi(g)}\varphi^\ssc(s)=\varphi^\ssc(\hs^g\hm s)\quad\ \text{for all }\ \,g\in G(\Fbar),\, s\in G^\ssc(\Fbar).\]
Thus we obtain a morphism of crossed modules
\[ (G^\ssc\to G,\hs \theta_G)\,\to\,(H^\ssc\to H,\hs \theta_H)\]
with obvious notations. Moreover, we have
\[\big\{\varphi(g_1),\varphi(g_2)\big\}_H= \varphi^\ssc\big(\{g_1,g_2\}_G\big) \quad\ \text{for all}\ \,g_1,g_2\in G(\Fbar)\]
with obvious notations; see \cite{Borovoi-MO} for a proof.
Thus we obtain a morphism of stable crossed modules
\begin{equation}\label{e:varphi-stable}
 \big(G^\ssc\!\to G,\,\theta_G,\{,\hmm\}_G\hs\big)\,\lra\, \big(H^\ssc\!\to H,\, \theta_H,\{,\hmm\}_H\hs\big).
\end{equation}
\end{subsecc}

\begin{subsecc}\label{ss:ab}
In this appendix, we denote by $H^1$ and $\H^1$
the first {\em Galois} cohomology and hypercohomology.
One can define the first Galois (hyper)cohomology of the $\Gal(\Fs/F)$-equivariant crossed module
\begin{equation}\label{e:BN}
\H^1(F,\hs \GscR\theta)\coloneqq
   \H^1(\Gal(\Fs/F),\GscF \theta\hs\big);
\end{equation}
see \cite[Section 3]{Borovoi-Memoir} or Noohi \cite[Section 4]{Noohi}.
A priori it is just a pointed set. However, using the symmetric braiding $\{\,,\}$,
one can define a structure of abelian group on the pointed set \eqref{e:BN};
see Noohi \cite[Corollaries 4.2 and 4.5]{Noohi}.
We denote the obtained abelian group by
\[H^1_\ab(F,G)=\H^1(F, G_\ab)\coloneqq \H^1\big(\hs\Gal(\Fs/F),\hs \GscF \theta,\hs\{,\hmm\}\hs\big).\]
A homomorphism of reductive $F$-groups $\varphi\colon G\to H$
induces a morphism of stable crossed modules
\eqref{e:varphi-stable}, which in turn induces a homomorphism of abelian groups
\[\varphi_\ab\colon H^1_\ab(F,G)\to H^1_\ab(F, H).\]
Thus $G\rightsquigarrow H^1_\ab(F,G)$ is a functor
from the category of reductive $F$-group to the category of abelian groups.
\end{subsecc}

\begin{subsecc}
The morphism of crossed modules (but not of stable crossed modules)
\[i_G\colon (1\to G)\,\into\, (G^\ssc\to G)\]
induces a morphism of pointed sets
\[(i_G)_*\colon \H^1(F, 1\to G)\to \H^1(F,\hs G^\ssc\!\labelt\rho G).\]
The {\em abelianization map}
is the composite morphism of pointed sets
\[\ab\colon\, H^1(F,G)=\H^1(F, 1\to G)\,\labeltoo{(i_G)_*}\,
   \H^1\big(F, \GscR\theta\big)=\H^1(F,G_\ab)\eqqcolon H^1_\ab(F,G).\]
Here $\H^1\big(F, \GscR\theta\big)$ and $\H^1(F,G_\ab)$ are the same sets,
but $\H^1(F,G_\ab)$ is endowed with the structure of abelian group
coming from the symmetric braiding $\{\,,\}$.
\end{subsecc}

\begin{subsecc}
For a maximal torus  $T\subseteq G$, we consider the homomorphism
\[ \rho\colon T^\ssc\to T\]
of Subsection \ref{ss:pi1},
which we  regard as a stable crossed module
with the trivial action $\theta_T$ of $T$ on $T^\ssc$
and the trivial symmetric braiding \,$\{,\hmm\}_T\colon T\times T\to T^\ssc$.
We may and shall identify the first Galois hypercohomology
of this stable crossed module
with the usual first Galois hypercohomology of the complex $T^\ssc\!\labelt\rho T$ in degrees $-1,\,0$:
\[ \H^1\big(F,\, T^\ssc\!\labelt\rho T,\,\theta_T, \{,\hmm\}_T\big)=\H^1(F,T^\ssc\!\labelt\rho T).\]
The morphism of stable crossed modules
\begin{equation}\label{e:iota}
j_T\colon\, \big(T^\ssc\!\labelt\rho T,\,\theta_T, \{,\hmm\}_T\big)\,\into\, \big(\hs G^\ssc\!\labelt\rho G,\,\theta, \{,\hmm\}\hs\big)
\end{equation}
is an {\em equivalence} (quasi-isomorphism),
that is, it induces isomorphisms of $F$-group schemes
\[\ker[T^\ssc\!\to T]\isoto \ker[G^\ssc\!\to G]\quad\ \text{and}
    \quad\ \coker[T^\ssc\!\to T]\isoto \coker[G^\ssc\!\to G].\]
Following an idea sketched by Labesse and Lemaire \cite{LL},
we observe that  \eqref{e:iota}   induces iso\-morphisms on groups of $\Fs$-points
\begin{align*}
\ker\big[T^\ssc(\Fs)\to T(\Fs)\big]&\isoto \ker\big[G^\ssc(\Fs)\to G(\Fs)\big]\\
\coker\big[T^\ssc(\Fs)\to T(\Fs)\big]&\isoto \coker\big[G^\ssc(\Fs)\to G(\Fs)\big]
\end{align*}
(in arbitrary characteristic);
see Theorem \ref{t:Zev} in Appendix \ref{s:Zev} below.
It follows that the induced map on Galois gypercohomology
\begin{equation*}
(j_T)_*\colon\, \H^1(F,T^\ssc\!\to T)\,\lra\,
    \H^1\big(F,\GscR \theta, \{,\hmm\}\hs\big)\eqqcolon H^1_\ab(F,G)
\end{equation*}
is an isomorphism of abelian groups; see Noohi \cite[Proposition 5.6]{Noohi}.
This shows that the abelian group structure
on the pointed set $\H^1\big(F, \GscR\theta\big)$
defined using the bijection $(j_T)_*$ (as in \cite[Section 3.8]{Borovoi-Memoir}\hs) coincides with
the abelian group structure defined by the symmetric braiding $\{\,,\}$.
\end{subsecc}

\def\fppf{{\rm fppf}}

\begin{remark}
Gonz\'alez-Avil\'es \cite{GA} defined the abelian fppf cohomology group
$H^1_{\fppf,\hs\ab}(X,G)$ and the abelianization map
 \[\ab\colon H^1_\fppf(X,G)\to H^1_{\fppf,\hs\ab}(X,G)\]
for a reductive group scheme $G$ over an arbitrary base scheme $X$,
which includes the case of a reductive group
over a field $F$ of arbitrary characteristic.
However, his definition uses the center $Z_G$ of $G$,
and hence it is functorial only with respect to the {\em normal} homomorphisms
$G_1\to G_2$ (homomorphisms with normal image, hence sending  $Z_{G_1}$  to $Z_{G_2}$)\hs),
whereas our definition above (over a field only) is functorial with respect to all homomorphisms.
\end{remark}

\def\cok{{\rm cok}}

\section{Equivalence on $\Fs$-points in arbitrary characteristic}
\label{s:Zev}

\centerline{\em Zev Rosengarten}
\medskip

In this appendix we prove the following theorem:

\begin{theorem}\label{t:Zev}
Let $F$ be a field of arbitrary characteristic and let $\Fs$ be a fixed separable closure of $F$.
Let \[\rho\colon G^\ssc\onto [G,G]\into G\]
be as in Subsection  \ref{ss:pi1}.
Let $T\subseteq G$ be a maximal torus.
We write $T^\ssc=\rho^{-1}(T)$.
Then the morphism of crossed modules
\[\big(T^\ssc(\Fs)\to T(\Fs)\big)\,\lra\,  \big(G^\ssc(\Fs)\to G(\Fs)\big)\]
is an equivalence (quasi-isomorphism).
\end{theorem}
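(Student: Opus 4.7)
The plan is to verify the two conditions for an equivalence of crossed modules on $\Fs$-points separately: equality of kernels and equality of cokernels.

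First I would dispatch the kernel and injectivity on cokernels. Since $\rho\colon G^\ssc\to G$ factors as the central isogeny $G^\ssc\onto [G,G]$ followed by $[G,G]\into G$, and since $T\cap [G,G]$ is a maximal torus of the semisimple group $[G,G]$, the preimage $T^\ssc=\rho^{-1}(T)=\rho^{-1}(T\cap[G,G])$ is a maximal torus of $G^\ssc$; in particular it is smooth and contains $\ker\rho\subseteq Z_{G^\ssc}$. Therefore, as $F$-group schemes,
$$\ker(T^\ssc\to T)=\ker\rho=\ker(G^\ssc\to G),$$
which gives the equality of kernels on $\Fs$-points. Injectivity of the induced map on cokernels is equally direct: if $t\in T(\Fs)$ equals $\rho(s)$ for some $s\in G^\ssc(\Fs)$, then $s\in\rho^{-1}(T)(\Fs)=T^\ssc(\Fs)$.

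The main step is surjectivity on cokernels, i.e.\ $G(\Fs)=T(\Fs)\cdot\rho(G^\ssc(\Fs))$. I would compare the two short exact sequences of smooth $F$-groups
$$1\to T\cap[G,G]\to T\to T^\ab\to 1, \qquad 1\to [G,G]\to G\to T^\ab\to 1,$$
where $T^\ab\coloneqq G/[G,G]$ is a torus. Both sequences have smooth kernels ($T\cap[G,G]$ is a torus because it is a maximal torus of $[G,G]$, and $[G,G]$ is semisimple), so both quotient maps are smooth and, since $\Fs$ is separably closed, both sequences remain short exact on $\Fs$-points. In parallel, the central isogenies $T^\ssc\to T\cap [G,G]$ and $G^\ssc\to [G,G]$ share the kernel $\ker\rho$, and their long fppf cohomology sequences over $\Fs$ yield natural identifications
$$(T\cap [G,G])(\Fs)/\rho(T^\ssc(\Fs))\,\cong\,H^1_\fppf(\Fs,\ker\rho)\,\cong\,[G,G](\Fs)/\rho(G^\ssc(\Fs)),$$
using the vanishing $H^1_\fppf(\Fs,G^\ssc)=H^1_\fppf(\Fs,T^\ssc)=0$ for smooth group schemes over a separably closed field. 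Splicing these, both $T(\Fs)/\rho(T^\ssc(\Fs))$ and $G(\Fs)/\rho(G^\ssc(\Fs))$ become extensions of $T^\ab(\Fs)$ by $H^1_\fppf(\Fs,\ker\rho)$, and the natural map between them is the identity on both outer terms; the five lemma then yields the required isomorphism in the middle.

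The main obstacle is that in positive characteristic $\ker\rho$ may be non-smooth (for instance $\mu_p\subset\SL_p$ in characteristic $p$), so one cannot recover $[G,G](\Fs)$ from $G^\ssc(\Fs)$ by a naive $\Fs$-point argument. Routing the quotient computation through fppf cohomology handles this uniformly, relying on the vanishing of $H^1_\fppf(\Fs,-)$ for smooth group schemes and on the standard facts that $T\cap[G,G]$ is a maximal torus of $[G,G]$, that $[G,G]$ is smooth, and that $T\to T^\ab$ is a smooth surjection in arbitrary characteristic.
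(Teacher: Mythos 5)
Your proof is correct, and for the main point (surjectivity on cokernels) it takes a genuinely different decomposition from the paper's. Appendix B works with the radical: it covers $G$ by the smooth group $C\times G^\ssc$ via $(c,s)\mapsto c\,\rho(s)$, whose kernel $Z\cong\rho^{-1}(C\cap[G,G])$ is finite, central and possibly non-smooth, compares the resulting fppf cohomology sequences for $T$ and for $G$ (both hitting the same $H^1_{\mathrm{fppf}}(\Fs,Z)$, with $H^1_{\mathrm{fppf}}(\Fs,C\times T^\ssc)=H^1_{\mathrm{fppf}}(\Fs,C\times G^\ssc)=1$ by smoothness), and reads off $G(\Fs)=T(\Fs)\cdot\rho\big(G^\ssc(\Fs)\big)$ in one step, using $C\subseteq T$. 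You instead filter through the derived group: you identify both $(T\cap[G,G])(\Fs)/\rho\big(T^\ssc(\Fs)\big)$ and $[G,G](\Fs)/\rho\big(G^\ssc(\Fs)\big)$ with $H^1_{\mathrm{fppf}}(\Fs,\ker\rho)$, and then splice over the coradical $G/[G,G]$ by a short five lemma. The underlying mechanism is identical -- push all non-smoothness into a finite central kernel, use fppf $H^1$ over $\Fs$ together with the vanishing of $H^1$ for smooth groups over a separably closed field -- so the two arguments have the same depth; what your route buys is that the common obstruction group $H^1_{\mathrm{fppf}}(\Fs,\ker\rho)$ is displayed explicitly, while the paper's cover $C\times G^\ssc\to G$ avoids your two extra inputs: exactness on $\Fs$-points of the sequences with kernels $[G,G]$ and $T\cap[G,G]$, and, more delicately, that the scheme-theoretic intersection $T\cap[G,G]$ is a maximal torus of $[G,G]$, hence smooth. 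That last fact is standard but is exactly the kind of statement that needs care in characteristic $p$ (kernels of homomorphisms of tori need not be smooth); it holds here because $X^*(G/[G,G])$ is a saturated sublattice of $X^*(T)$, so the diagonalizable kernel of $T\to G/[G,G]$ has torsion-free character group, and you should cite it (SGA3, Exp.\ XXII, or Conrad's notes on reductive group schemes) rather than just assert it. Your treatment of the kernel map and of injectivity on cokernels coincides with the paper's.
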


\begin{proof}
We must show that the maps
\begin{equation}\label{e:ker}
 i_{\ker}\colon \ker\big[T^\ssc(\Fs) \to T(\Fs)\big] \,\lra\, \ker\big[G^\ssc(\Fs) \to G(\Fs)\big]
\end{equation}
and
\begin{equation}\label{e:coker}
i_{\cok}\colon \coker\big[T^\ssc(\Fs) \to T(\Fs)\big] \,\lra\, \coker\big[G^\ssc(\Fs) \to G(\Fs)\big]
\end{equation}
are isomorphisms.

For \eqref{e:ker},  the injectivity is obvious.
Moreover, any element of $\ker\big[G^\ssc(\Fs) \to G(\Fs)\big]$
lies in the preimage $T^\ssc$ of $T$, hence it is an element of $T^\ssc(\Fs)$
and of $\ker\big[T^\ssc(\Fs) \to T(\Fs)\big]$,
which gives the surjectivity of $i_{\ker}$.

We prove the injectivity of \eqref{e:coker}.
Let   $[t]\in \coker\big[T^\ssc(\Fs) \to T(\Fs)\big]$, $t\in T(\Fs)$,
and $[t]\in\ker i_\cok$\hs; then $t=\rho(s)$ for some $s\in G^\ssc(\Fs)$.
Since $T^\ssc=\rho^{-1}(T)$, we see that $s\in T^\ssc(\Fs)$, whence $[t]=1$, as required.

We prove the surjectivity of \eqref{e:coker}.
Let $C \subseteq G$ denote the radical (largest central torus) of $G$.
Then the  map
\[\psi\colon C \times G^\ssc \to G,\qquad (c,s)\mapsto c\cdot \rho(s)\ \,\text{for}\ c\in C,\, s\in G^\ssc\]
is surjective with central kernel $Z\cong \rho^{-1}(C\cap [G,G])$ (which might be non-smooth).
We have an exact  commutative diagram of $F$-group schemes
\[
\xymatrix@R=6mm{
1 \ar[r] &Z\ar[r]\ar@{=}[d]  &C \times T^\ssc\ar[r]^-{\psi_T}\ar[d]  &T\ar[r]\ar[d]   &1 \\
1 \ar[r] &Z\ar[r]            &C \times G^\ssc\ar[r]^-\psi               &G \ar[r]        &1
}
\]
in which the maps on $\Fs$-points
\[\psi_T\colon C(\Fs)\times T^\ssc(\Fs)\to T(\Fs)\quad\ \text{and}
\quad\ \psi\colon C(\Fs)\times G^\ssc(\Fs)\to G(\Fs)\]
might not be surjective.
This diagram gives rise to an exact commutative diagram of fppf cohomology groups
\[
\xymatrix@R=6mm{
C(\Fs) \times T^\ssc(\Fs)\ar[r]^-{\psi_T}\ar[d]  &T(\Fs)\ar[r]\ar[d]   &H_\fppf^1(\Fs, Z)\ar[r]\ar@{=}[d]   &H_\fppf^1(\Fs,C \times T^\ssc)=1\ar[d]\\
C(\Fs) \times G^\ssc(\Fs)\ar[r]^-\psi                &G(\Fs)\ar[r]     &H_\fppf^1(\Fs, Z)\ar[r]             &H_\fppf^1(\Fs,C \times G^\ssc)=1
}
\]
in which the rightmost term in both rows is trivial because $\Fs$ is separably closed
and the $F$-groups $C \times T^\ssc$,  \,$C \times G^\ssc$ are smooth.
The latter diagram shows that
\[G(\Fs) = T(\Fs)\cdot \psi\big(\hs C(\Fs) \times G^\ssc(\Fs)\hs\big)=
    T(\Fs)\cdot C(\Fs)\cdot\rho\big(G^\ssc(\Fs)\big) = T(\Fs)\cdot \rho\big(G^\ssc(\Fs)\big),\]
whence the surjectivity of \eqref{e:coker}.
\end{proof}

\paragraph*{\sc Acknowledgements.}
The author is very grateful to Vladimir Hinich
for his short proof of Theorem \ref{t:exact}
(together with  Proposition \ref{p:Q/Z-B-BGt} and Lemma \ref{l:Tor-Q})
included in this paper instead of the original lengthy proof of the author,
and to Zev Rosengarten for his Appendix \ref{s:Zev}.
Both the author of the article and the author of the appendix
thank the anonymous referee for his/her insightful comments and suggestions.

\end{document}